\documentclass[a4paper]{article}


\usepackage{amssymb,amsmath,url,rotating,ifthen,algorithm,algorithmicx,algpseudocode,epsfig,multirow,array,color,multicol,graphicx,subfig,float,hhline,calc,enumitem}

\usepackage{amsthm}
\newboolean{fabien}
\setboolean{fabien}{false}
\usepackage{booktabs} 
\newcommand{\ra}[1]{\renewcommand{\arraystretch}{#1}}



\usepackage{ifthen}
\usepackage{rotating}

\def\bm{}

\def\E{{\mathbb E}}
\def\NN{\mathcal{N}}
\def\e{\epsilon }
\def\chi{{\mathbf 1}}

\def\PP{{\mathbb P}}

\def\w{\omega }
\def\R{{\mathbb R}}

\def\w{\bm{w}}

\newtheorem{lemma}{Lemma}
\newtheorem{conjecture}{Conjecture}
\newtheorem{proposition}{Proposition}
\newtheorem{property}{Property} 
 
\newtheorem{theorem}{Theorem} 
\newtheorem{remark}{Remark}
\newtheorem{definition}{Definition}

\usepackage{xcolor}
\newcommand{\memo}[2]{\textcolor{#1}{#2}}

\renewcommand{\memo}[2]{} 

\newcommand{\sam}[1]{\memo{cyan}{S.A.M.: #1}}
\newcommand{\mlc}[1]{\memo{magenta}{M.L.C.: #1}}

\sloppy

\newenvironment{customthm}[1]
  {\innercustomthm}
  {\endinnercustomthm}

\begin{document}


\title{Analysis of Different Types of Regret in Continuous Noisy Optimization}

\author{Sandra Astete-Morales, Marie-Liesse Cauwet, Olivier Teytaud\\
TAO/Inria Saclay-IDF, Univ. Paris-Saclay\\
Bat. 660, rue Noetzlin, Gif-Sur-Yvette, France
}

\maketitle

\begin{abstract}

The performance measure of an algorithm is a crucial part of its analysis.  The performance can be determined by the study on the convergence rate of the algorithm in question. It is necessary to study some (hopefully convergent) sequence that will measure how ``good'' is the approximated optimum compared to the real optimum.

The concept of \emph{Regret} is widely used in the bandit literature for assessing the performance of an algorithm. The same concept is also used in the framework of optimization algorithms, sometimes under other names or without a specific name. And the numerical evaluation of convergence rate of noisy algorithms often involves \emph{approximations} of regrets. We discuss here two types of approximations of Simple Regret used in practice for the evaluation of  algorithms for noisy optimization. We use specific algorithms of different nature and the noisy sphere function to show the following results. The approximation of Simple Regret, termed here Approximate Simple Regret, used in some optimization testbeds, fails to estimate the Simple Regret convergence rate. We also discuss a recent new approximation of Simple Regret, that we term Robust Simple Regret, and show its advantages and disadvantages.

\end{abstract}

\sloppy



\section{Introduction}\label{transfo}
The performance measure of an algorithm involves the evaluation of the quality of the approximated optimum with regards to the real optimum. This can be done by using the concept of \emph{Regret}, well studied in the machine learning literature and used  in the noisy optimization literature, sometimes under other names. Basically, in the optimization framework, the regret acounts for the ``loss'' of choosing the point used in the algorithm over the best possible choice: the optimum. Therefore, we measure the difference between the point used/recommended by the algorithm and the optimum in terms of the objective function.

In general, an optimization algorithm searches for the optimum, and to do so, it produces iteratively \emph{search points} which will be evaluated through the objective function. And at regular steps, the algorithm must return a \emph{recommendation point} that will be the best approximation to the optimum so far. Note that the recommendation point can be equal to a search point, but not necessarily.

The most usual form of regret is termed \emph{Simple Regret}. The \emph{Simple Regret} measures the distance, in terms of fitness values, between the optimum and the recommendation point output by the algorithm. It is widely used (possibly without this name) in noisy optimization \cite{fabian,chen1988,BubeckE09}. However, some test beds, notably the Bbob/Coco framework in the first version, did not allow the distinction between search points (at which the fitness function is evaluated) and recommendations (which are output by the algorithm as an approximation of the optimum), so that the Simple Regret can not be checked. This leads to the use of an \emph{Approximate Simple Regret} (name by us), which evaluates the fitness difference between the search points (and not the recommendations) and the optimum. Later, another form of regret, that we will term here \emph{Robust Simple Regret}, was also proposed, using recommendation points. We analyze in this paper the use of different regrets that aim to estimate the quality of the approximated optimum in a similar way. In particular, we show to which extent they lead to incompatible performance evaluations of the same algorithm over the same class of noisy optimization problems, i.e. the convergence rate for the Approximate or the Robust Simple Regret overestimates or underestimates the convergence rate of the more natural simple regret. We also prove some new results in terms of Simple Regret itself.

\section{Framework and Regrets}\label{sec:framework}

This section is devoted to the formalization of the noisy optimization problem considered and the analysed regrets. We will focus on the \emph{Simple Regret} and the alternative definitions that aim to approximate it (denoted here \emph{Approximate Simple Regret} and \emph{Robust Simple Regret}). At the end of the section we will highlight some general relationships between the presented regrets.

\subsection{Continuous Noisy Optimization}\label{sec:noisyOpt}
Given a fitness function $F:D\subset \R^d\to \R$, also known as objective function, optimization (minimization) is the search for the optimum point $x^*$ such that $\forall x\in D, F(x^*)\leq F(x)$. The fitness function is corrupted by additive noise. In other words, given a search point $x\in D$, evaluating $F$ in $x$ results in an altered fitness value $f(x,\w)$ as follows:   
\begin{equation}\label{additnoise}
	f(x,\w)=F(x)+\w, 
\end{equation}
where $\w$ is an independent random variable of mean zero and variance $\sigma$.  In the present paper, we will consider a simple case, namely a standard Gaussian additive noise 
\footnote{More general cases such that $\E f(x,\w)=F(x)$ can be considered, as most algorithms do not request the noise to be additive and independent; the key point is the absence of bias. \mlc{maybe some cites here? To me, this comment is not clear enough.}\sam{Maybe the comment is not necessary?}}. In addition, 
we assume that $F(x)=\|x-x^*\|^2$, 
where $x^*$ is randomly uniformly drawn in the domain $D$.
\emph{Noisy optimization} is then the search for the optimum $x^*$ such that $\E_\w f(x^*,\w)$ is approximately minimum, where $\E_\w$ denotes the expectation over the noise $\w$.

Consider a noisy black-box optimization scenario: for a point $x$, the only available information is the noisy value of $F$ in $x$ as given by $f(x,\w)$ for some independent $\w$. An optimization algorithm generates $x_1,x_2,\dots,x_n,\dots$, successive \emph{search points} at which the objective function is evaluated in a noisy manner. It also generates
$\tilde x_1,\tilde x_2,\dots,\tilde x_n,\dots$ which are \emph{recommendations} or \emph{approximations of the optimum} after $n$ fitness evaluations are performed. 

\subsection{Simple Regret and variants}\label{sec:criteria}
The \emph{Simple Regret} ($SR$) focuses only on approximating the optimum in terms of fitness values. Its definition is:
\begin{equation*}
 SR_n=\E_\w \left(f(\tilde x_n,\w)-f(x^*,\w)\right) = F(\tilde x_n)-F(x^*)
\end{equation*} Notice that the expectation operates only on $\w$ in $f(\tilde x_n,\w)$, and not on $\tilde x_n$. As a consequence $SR_n$ is a random variable due to the stochasticity of the noisy evaluations of the search points or the (possible) internal randomization of the optimization algorithm. 

The $SR$ can be a part of the performance evaluation of an algorithm. In the noise-free case it can be used to determine the \emph{precision} of a method, by ensuring that the algorithm outputs a recommendation $\tilde{x}_m$ satisfying $SR_m \leq \e$. 
Even more, when testing algorithms, it is common to use the ``first hitting time'' (FHT). FHT in fact refers to the first ``stable'' hitting time, i.e. the next recommendation is at least as good as the previous one. 
This is a reasonable assumption for algorithms solving noise-free problems. In this case the FHT is the minimum $n$ such that $SR_n\leq \e$, provided that the recommendation is defined as $\tilde x_n=x_{i(n)}$ with $1\leq i(n)\leq n$ minimizing $SR_{i(n)}$. However, there is no exact equivalence or natural extension for the FHT with precision $\e$ on noisy optimization. The algorithm only has access to noisy evaluations hence it cannot compute with ``certainty'' $SR_n$, which corresponds to the precision of the algorithm.


An alternative definition, that aims to measure the precision in a similar way to $SR$, is the  \textit{Approximate Simple Regret}\footnote{The name is proposed by us.} ($ASR$), defined by:
\begin{equation*}
 ASR_n=\min_{m\leq n} F(x_m)-F(x^*).
\end{equation*}
$ASR$ takes in account the ``best'' evaluations among all the search points. 
It is used in the Bbob/Coco framework \cite{nbbob1,nbbob2,nbbob3,nbbob4,nbbob5,nbbob6,nbbob7,nbbob8,nbbob9}, and in some theoretical papers \cite{fogaasr}.
Notice that since $ASR$ is non-increasing, the FHT can be computed. 

In this paper we will also discuss another variant of regret, the \emph{Robust Simple Regret}\footnote{Discussed on Bbob-discuss mailing list (\url{http://lists.lri.fr/pipermail/bbob-discuss/2014-October.txt}. The name is proposed by us.} ($RSR$), defined by:
\begin{equation*}
	RSR_n=\min_{k\leq n}\max_{(k-\lfloor g(k)\rfloor)< m\leq k}\left( F(\tilde x_m)-F(x^*)\right),
\end{equation*}
where $g(n)$ is a polylogarithmic function of $n$ and $\lfloor \cdot \rfloor$ is the floor function. 
The $RSR$ is the ``best'' SR since the beginning of the run, sustained during $\lfloor g(k)\rfloor$ consecutive recommendations \footnote{If $(k-\lfloor g(k)\rfloor)<0$, then the $\max$ on the definition considers indexes between $1$ and $k$}. The polylogarithmic nature of $g(\cdot)$ is explained by the following argument: $g(k)$ be large enough, so that we have a 
correct recommendation confirmed over $g(k)$ iterations, but small enough, so that we do not have to wait many evaluations  before acknowledging that a correct recommendation has been found.
The $RSR$ uses the recommendation $\tilde x_m$ instead of the search point $x_m$ used in $ASR$. But it uses the worst of a sequence of recommendations. 

As a side note about the definition of $RSR$, it was originally proposed to use a quantile instead of the maximum. The ``quantile version'' (without this name), was proposed to become part of the performance measure in Bbob/Coco. However, we will show that it is possible to get a $RSR$ decreasing quicker than the $SR$, so that $RSR$ is a poor approximation of $SR$. The result is valid even with the quantile $100\%$, i.e. the maximum. The same is possible with any other quantile. 

The introduction of $RSR$ apparently outplays $ASR$ as an approximation of $SR$ by two means. First, by using recommendations rather than search points. Second, by checking on multiple recommendations that the optimum is correctly found with a given precision. In addition, as well as $ASR$, it is non-increasing, therefore it can be used for fastening experiments on testbed. Please note however that this advantage makes sense only when the target fitness value is known, which is rarely the case except in an artificial testbed.

To investigate the convergence rate of the regrets, we will use a slightly different notation than classical works on noisy optimization. Usually the rates are given in terms of $O(h(n))$ where $h(n)$ is some function depending on the number of evaluations $n$. The state of the art shows that in many cases (\cite{dupac,fabian,shamir,fogaasr}) $SR_n = O(n^\psi)$ where $\psi<0$ implies that the algorithm converges. Therefore, there is a linear relationship between $\log(SR_n)$ and $\log(n)$ with a \emph{slope} $\psi$, where $\log(\cdot)$ is the natural logarithm. We will then refer to the \emph{slope of the regret} when speaking about the convergence rate of the regret. The definition of the \emph{slope of the SR} is:

\begin{equation*}
s(SR) = \underset{n\to\infty}{\lim\sup}~ \frac{\log(SR_n)}{\log(n)}
\end{equation*}

We have the corresponding definition for the slope of $ASR$ and $RSR$. Notice that if the slope is close to $0$, then the algorithm  (at best) converges \emph{slowly}. On the contrary, if the slope is negative and far away from $0$, then the algorithm is \emph{fast}.

\subsection{General results for $SR$, $ASR$ and $RSR$: $RSR$ is an optimistic approximation of $SR$}

The problems analysed in this paper arise from the gap between $s(SR)$ and $s(ASR)$ or $s(RSR)$. Ideally we would like to have a regret that can be used easily and that \emph{approximates} the Simple Regret. In the following sections (\ref{sec:EA} and \ref{sec:stoc}) we will see with specific examples there is indeed a gap between $s(SR)$ and $s(ASR)$. In some cases using $ASR$ overestimates the performance of algorithms and in others it underestimates their performance. An extreme case is detailed in section \ref{sec:stoc}, where we prove that Alg.~\ref{alg:shamir} has optimal convergence rate in term of $SR$ whilst for $ASR$ it does not converge at all.

In general, by definition, we have that for any algorithm,
$s(RSR)\leq s(SR)$. From this point of view, $RSR$ is a correct lower bound for $SR$. In other words, if an algorithm is fast in terms of $SR$, its performance measured by $RSR$ will be at least as good. 
Unfortunately, this bound is not nearly tight. We will prove that a small modification on the algorithm induces $s(RSR)\leq s(ASR)$ whereas $s(SR)$ is the same - so that, for cases in which $s(ASR)<s(SR)$ (sometimes by far, as explained in later sections), we get $s(RSR) < s(SR)$ (sometimes by far).


Let $A$ be an optimization algorithm and its search points $(x_i)_{i \geq 1}$. Consider another algorithm denoted $A_g$ and its successive search points $(X_i)_{i \geq 1}$. The search points of $A_g$ are obtained by repeating $\lfloor g(n)\rfloor$ times, for any $n$, the search points $x_n$ of $A$. Hence, we get the assignment:  

\begin{tabular}{rcrrl} 
$X_{1}=$ & $\dots$ & $=$ & $ X_{1+\lfloor g(1)\rfloor}$ & $\leftarrow x_1$\\ 
$X_{2+\lfloor g(1) \rfloor}=$ & $\dots$& $=$ & $X_{2+\lfloor g(1)\rfloor+\lfloor g(2)\rfloor} $ & $\leftarrow x_2$\\
& $\vdots$ & \\
$X_{n+\sum_{j=1}^{n-1} \lfloor g(j)\rfloor} =$& $\dots$ & $=$ & $X_{n+\sum_{j=1}^{n} \lfloor g(j)\rfloor} $& $\leftarrow x_{n}$
\end{tabular} 

Let the recommendation points of $A_g$ be defined by $\tilde{X}_n = X_n$ for any $n$.  Therefore $A_g$ is a slightly modified version of $A$ since there is an additional polylogarithmic number of evaluation in $A_g$, assuming $g$ is polylogarithmic. The $RSR$ of $A_g$ (say $RSR_{A_g}$) converges approximately as fast as  the $ASR$ of the original algorithm (say $ASR_A$). The extra polylogarithmic number of evaluations  does not affect the linear convergence in log/log scale. Hence, for any algorithm $A$, $s(RSR_{A_g}) \leq s(ASR_A)$. 

The general relationships between the slopes of the $SR$ and its approximations are not conclusive since the bounds are not tight. We will show some gaps between the different approximations of $SR$ and the $SR$ itself.
 In the following sections we present five algorithms that will serve as clear examples to see the differences of using one or another regret as performance measures. 
We will focus in two types of algorithms: the first group, in Section~\ref{sec:EA}, consists of Evolutionary Algorithms and Random Search and the second, in Section~\ref{sec:stoc}, of algorithms using approximations of the gradient of the objective function.
For each class of algorithms, we exhibit convergence rate bounds on $s(SR)$, $s(ASR)$ and $s(RSR)$. Section~\ref{sec:experiments} displays some experimental works in order to confront theory, conjecture and practice.

\section{Evolutionary Algorithms}\label{sec:EA}
On the group of Evolutionary Algorithms (EAs), we present Random Search (RS), Evolution Strategy (ES) and Evolution Strategy with resampling (ES+r). They all use comparisons between fitness values to optimize the function. 

\vfill\break
\subsection{Random Search}
Random Search (Alg. \ref{alg:rs}) is the most basic of stochastic algorithms \cite{rastrigin1964convergence}. 
The search points $x_1,\dots,x_n,\dots$ are independently identically drawn according to some probability distribution. $\tilde x_n$ is usually the search point with the best fitness so far, i.e. with $y_i$ the fitness value obtained for $x_i$, we have $\tilde x_n=x_i$ with $i\in\{1,\dots,n\}$ minimizing $y_i$. 

\begin{algorithm}[H]
\caption{\label{alg:rs} {\scriptsize Random Search.}}
\begin{algorithmic}[1]\scriptsize
	\State{{\bf Initialize:} Candidate solution $\tilde x$ randomly drawn in $[0,1]^d$}
	\State{$bestfitness \leftarrow f(\tilde x)$}
	\State{{\bf Initialize:} $n\leftarrow1$}
	\While{not terminated}
		\State{Randomly draw $y$ in $[0,1]^d$.}
		\State{$fitness\leftarrow f(y)$}
		\If{$fitness<bestfitness$ }\label{line:sel1}
			\State{ $\tilde x \leftarrow y$} 
			\State{ $bestfitness\leftarrow fitness$}\label{line:sel2}
		\EndIf
		\State{$n\leftarrow n+1$}
	\EndWhile 
	
	\Return{$\tilde{x}$}
\end{algorithmic}
\end{algorithm}

We consider in this paper a simple variant of RS to show clearly the contrast between $SR$ and $ASR$.

{\bf Framework for RS:} each search point is randomly drawn independently and uniformly, sampled once and only once, with the uniform probability distribution over $[0,1]^d$. The objective function is the noisy sphere function $f$:
\begin{equation}
f(x)=\|x-x^*\|^2+\NN.\label{noisysphere}
\end{equation}
where $\NN$ is a standard Gaussian variable.
 
In this setting, existing results in the literature imply a bound on $s(ASR)$ as explained in Property \ref{prop:RS}. We will prove then that the slope of the Simple Regret is not negative, as formalized in Theorem \ref{rswn}.

\subsubsection{Approximate Simple Regret: $s(ASR)=-2/d$}

\begin{property}\label{prop:RS}
Consider RS described in Alg.~\ref{alg:rs}, with the framework above.
Then almost surely $ASR_n=O\left(\frac{1}{n^{2/d}}\right)$. 
\end{property}

\begin{proof}
 \cite{deheuvels} has shown that among $n$ points generated independently and uniformly over $[0,1]^{d}$ the closest  
  search point to the optimum  is almost surely at distance $O\left(\frac{1}{n^{1/d}}\right)$ from the optimal point $x^*$ within a logarithmic factor. Hence for the sphere function\footnote{The result also holds for a function locally quadratic around a unique global optimum.}, the Approximate Simple Regret $ASR_n$ almost surely satisfies: $ASR_n = O\left(\frac{1}{n^{2/d}}\right)$ up to logarithmic factors. 
\end{proof}


\subsubsection{Simple Regret: $s(SR)$ is not negative}


%

\begin{theorem}\label{rswn}
With the framework above, for all $\beta>0$, the expected simple regret $\E(SR_n)$ is not $O(n^{-\beta})$.
\end{theorem}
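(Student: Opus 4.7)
The plan is to prove the quantitative strengthening $\E(SR_n)=\Omega(1/\sqrt{\log n})$, which is not $O(n^{-\beta})$ for any $\beta>0$. The intuition: since the Gaussian noise has variance bounded away from $0$ while the true fitnesses $F(x_i)$ lie in the bounded range $[0,d]$, the minimizer of the noisy values $y_i=F(x_i)+\omega_i$ is determined essentially by which $\omega_i$ is most negative, not by which $x_i$ is closest to $x^*$; the recommendation is therefore only weakly biased towards low-fitness points.

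Fix $\epsilon_n=c/\sqrt{\log n}$ for a small constant $c>0$ and a constant $K$ with $K^{d/2}>2$. After conditioning on the positive-probability event that $x^*$ lies in the interior of $[0,1]^d$, partition the search indices into
\begin{equation*}
G=\{i : F(x_i)\leq\epsilon_n\}\quad\text{and}\quad\hat B=\{i : \epsilon_n<F(x_i)\leq K\epsilon_n\}.
\end{equation*}
The event $\{F(\tilde x_n)\leq\epsilon_n\}$ forces $\min_{i\in G}y_i<\min_{i\in\hat B}y_i$. Using $F(x_i)\geq 0$ on $G$ and $F(x_i)\leq K\epsilon_n$ on $\hat B$, this gives
\begin{equation*}
\PP\bigl(F(\tilde x_n)\leq\epsilon_n\mid x_1,\dots,x_n\bigr)\leq \PP\bigl(M_G<M_{\hat B}+K\epsilon_n\mid x_1,\dots,x_n\bigr),
\end{equation*}
where $M_G=\min_{i\in G}\omega_i$ and $M_{\hat B}=\min_{i\in\hat B}\omega_i$ are, conditionally on the partition, independent minima of $|G|$ and $|\hat B|$ standard normals.

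The core of the argument is an extreme-value estimate. A volume computation plus a Chernoff bound give $|G|,|\hat B|=\Theta(n\epsilon_n^{d/2})$ with high probability and $|\hat B|/|G|\to K^{d/2}-1>1$. By classical Gaussian extreme-value bounds (or direct Mills-ratio estimates), $M_G$ and $M_{\hat B}$ concentrate around $-\sqrt{2\log|G|}$ and $-\sqrt{2\log|\hat B|}$ with fluctuations of order $1/\sqrt{\log n}$, so the median of $M_G-M_{\hat B}$ is of order $\log(K^{d/2}-1)/\sqrt{2\log n}=\Theta(1/\sqrt{\log n})$. Choosing $c$ small enough that $K\epsilon_n$ lies below half this median gives $\PP(M_G<M_{\hat B}+K\epsilon_n)\leq 1/2$, hence $\E(SR_n)\geq \epsilon_n\cdot\PP(F(\tilde x_n)>\epsilon_n)\geq \epsilon_n/2$.

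The main technical obstacle is making the extreme-value estimates rigorous enough: one must control not merely the means but the medians (equivalently, the lower tails) of $M_G$ and $M_{\hat B}$ uniformly in $n$, and ensure that the $1/\sqrt{\log n}$ fluctuation scale really is absorbable by the constant $c$. A minor additional nuisance is the boundary-effect regime where $x^*$ sits near $\partial[0,1]^d$ and the volume of the fitness-level sets gets distorted, but this is exactly what conditioning on the interior event handles.
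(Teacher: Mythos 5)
Your proposal runs on the same engine as the paper's proof: partition the sample into a ``good'' set (fitness below a threshold) and a larger ``bad'' set at a slightly worse fitness level, count both by a volume argument plus a concentration inequality, and argue that because the bad set is larger, the minimum of the Gaussian noise over it undercuts the minimum over the good set, so the noisy argmin in Lines 7--9 of Alg.~1 picks a bad point with probability bounded away from zero. What differs is the choice of scales, and it genuinely changes the character of the argument. The paper takes $\epsilon(n)=n^{-\beta}$ and a polynomially larger bad shell, so the two noise minima are separated by $\Theta(\sqrt{\log n})$, which swamps the vanishing fitness offset; only crude Gaussian quantile bounds are needed, and the conclusion comes by contradiction for each $\beta$. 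You work at the critical scale $\epsilon_n\asymp 1/\sqrt{\log n}$ with $|\hat B|/|G|$ only a constant greater than $1$, which buys the stronger and essentially sharp quantitative bound $\E(SR_n)=\Omega(1/\sqrt{\log n})$ directly. The price is that the gap between the two minima is $\Theta(1/\sqrt{\log n})$ --- the same order as both the fluctuations of each minimum and the offset $K\epsilon_n$ you must beat --- so the step $\PP(M_G<M_{\hat B}+K\epsilon_n)\leq 1/2$ cannot be dispatched by loose ``concentration'' language; you need an explicit constant-order lower bound on $\PP(M_G-M_{\hat B}\geq K\epsilon_n)$. This can be done (take $t$ the $1/|G|$-quantile of $\NN$, so $\PP(M_G\geq t)\to e^{-1}$, and use a Mills-ratio estimate to get $\PP(\NN\leq t-K\epsilon_n)\gtrsim e^{-\sqrt{2}Kc}/|G|$, whence $M_{\hat B}\leq t-K\epsilon_n$ with constant probability once $K^{d/2}$ is large enough and $c$ small enough), but it delivers some constant probability rather than exactly $1/2$; that is all you need, since $\E(SR_n)\geq c_0\,\epsilon_n$ for any fixed $c_0>0$ already rules out $O(n^{-\beta})$. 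You correctly identified this extreme-value estimate as the crux; with it filled in, your argument is a valid --- and sharper --- proof of the theorem, and it also handles the boundary effect for $x^*$ that the paper's volume computation silently ignores.
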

\begin{proof} See supplementary material. \end{proof}

{\bf Remark:} Roughly speaking, the proof of the theorem is based on the fact that with a non-zero probability a search point which does not have the best fitness value, is selected as the best point in Lines~\ref{line:sel1}-\ref{line:sel2} of Alg. \ref{alg:rs}.

\def\movedintosup{
The following sections prove this theorem.\mlc{if we are too long, we can put this proof in supplementary material.}

\begin{lemma}[the quantiles of the standard
Gaussian random variable.]

Let $Q(q)$ be the quantile of the standard centered Gaussian $\NN$,
i.e. $\forall q\in (0,1), P(\NN \leq Q(q) ) =q$.
       
Then, there exist $X(q)=-\sqrt{-\log(q S_X(q) )}$ and $Y(q)=-\sqrt{-\log(q S_Y(q))}$ such that
$$X(q) \leq Q(q) \leq Y(q),$$
for some $S_X(q)$ and $S_Y(q)$ polylogarithmic as functions of $q$.
\end{lemma}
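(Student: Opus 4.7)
By symmetry of the standard Gaussian it is enough to handle $q\in(0,1/2)$, in which case $t:=-Q(q)>0$ and $q=P(\NN>t)$. The whole argument reduces to inverting the classical Mills–ratio inequality
\[
\frac{1}{\sqrt{2\pi}}\,\frac{e^{-t^{2}/2}}{t}\Bigl(1-\frac{1}{t^{2}}\Bigr)\;\le\;P(\NN>t)\;\le\;\frac{1}{\sqrt{2\pi}}\,\frac{e^{-t^{2}/2}}{t},
\]
valid for $t\ge 1$ (tiny $q$ being the only regime where something must be checked; values of $q$ bounded away from $0$ are handled by choosing $S_X,S_Y$ constant on that range).

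The plan is to take logarithms in the Mills bounds, writing
\[
-\tfrac{t^{2}}{2}-\log(t\sqrt{2\pi})+\log\!\Bigl(1-\tfrac{1}{t^{2}}\Bigr)\;\le\;\log q\;\le\;-\tfrac{t^{2}}{2}-\log(t\sqrt{2\pi}),
\]
and then solve for $t^{2}$. This gives an upper and a lower bound of the form $t^{2}=-2\log q-2\log t-\log(2\pi)+\eta(t)$, with $\eta(t)=O(1/t^{2})$. Using that the leading–order identity already forces $t=\Theta(\sqrt{-\log q})$, the correction $-2\log t-\log(2\pi)+\eta(t)$ is of order $\log\log(1/q)$, i.e.\ polylogarithmic in $q$. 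Setting
\[
\log S_X(q)\;:=\;-t^{2}-\log q\quad\text{(upper Mills bound)},\qquad
\log S_Y(q)\;:=\;-t^{2}-\log q\quad\text{(lower Mills bound)},
\]
with $t$ the exact $-Q(q)$ in each case, produces $t^{2}=-\log(qS_X(q))$ and $t^{2}=-\log(qS_Y(q))$, and the above estimate shows that both $S_X$ and $S_Y$ differ from $-\log q$ only by a polylogarithmic factor in $1/q$, hence are polylog.

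To conclude, I would verify the two inequalities $X(q)\le Q(q)\le Y(q)$ directly: $X(q)=-\sqrt{-\log(qS_X(q))}\le Q(q)$ is equivalent to $t^{2}\le -\log(qS_X(q))$, which is exactly what the upper Mills bound gives after the substitution above; symmetrically for $Y(q)$ using the lower Mills bound. A final sanity check on the small range $q\in[1/2-\varepsilon,1/2)$ where the sign of $Q(q)$ flips is disposed of by continuity and by enlarging the constants in $S_X,S_Y$.

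\textbf{Where the difficulty sits.} The analytic content is entirely routine once Mills is in hand; the only point that requires care is the bootstrap step where one uses the crude estimate $t=\Theta(\sqrt{-\log q})$ to control the $\log t$ term and show it is absorbed by a polylog factor. The cosmetic point of packaging $-2\log q$ inside a single $-\log(qS(q))$ (rather than $-\log(q^{2}S(q))$) is what forces $S_X,S_Y$ to grow like $1/q$ up to polylog factors; the statement is consistent provided one reads ``polylogarithmic in $q$'' as ``polynomial in $\log(1/q)$ after absorbing the unavoidable $1/q$,'' which is how I would phrase the bookkeeping so the final inequalities read exactly $X(q)\le Q(q)\le Y(q)$.
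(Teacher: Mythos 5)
Your route---inverting the Mills-ratio bounds and bootstrapping $t=\Theta(\sqrt{-\log q})$ to control the residual $\log t$ term---is the standard derivation, and it is essentially the content of the external note that the paper cites \emph{in place of} a proof: the paper's entire argument for this lemma is a pointer to a reference on normal tail bounds, so your write-up supplies more detail than the paper does. The bootstrap step itself is sound: $-2\log t-\log(2\pi)+\eta(t)=\Theta(\log\log(1/q))$ is indeed polylogarithmic.

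The genuine gap is the one you flag in your closing paragraph, and it cannot be absorbed by rephrasing. Your own computation gives $Q(q)^2=t^2=-2\log q+O(\log\log(1/q))$, whereas $-\log(qS(q))=-\log q+O(\log\log(1/q))$ for any genuinely polylogarithmic $S$; the leading terms differ by a factor of $2$. Consequently the asserted lower bound $X(q)\le Q(q)$ forces $S_X(q)=e^{-t^2}/q=\Theta\bigl(q\cdot\mathrm{polylog}(1/q)\bigr)$ (note this \emph{decays} like $q$; your ``grows like $1/q$'' has the sign reversed, though the conclusion---not polylogarithmic---is the same). So what your Mills argument actually proves is the corrected statement with $-2\log(qS(q))$, equivalently $-\log(q^2S(q))$, under the square root; redefining ``polylogarithmic in $q$'' to absorb a polynomial factor of $q$ does not prove the statement as written, it changes the statement. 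This is a defect of the lemma rather than of your method: the cleaned-up version in the paper's supplementary material restores exactly this factor of $2$, giving bounds of the form $1-\sqrt{-2\log(2q)}$ and $1-\sqrt{-\tfrac{2}{\kappa}\log(c(\kappa)q)}$. You should either prove that corrected form (your argument does so essentially verbatim) or state explicitly that the lemma as written fails as $q\to 0$. A secondary point: for $q\ge 1/2$ one has $Q(q)\ge 0$ while $Y(q)=-\sqrt{\cdot}\le 0$ whenever it is real, so the upper inequality cannot be rescued by enlarging constants there either; the statement only makes sense for small $q$, which is the regime in which it is used.
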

Proof: See e.g. http://www.johndcook.com/normalbounds.pdf TODO we should put this into a clean reference.QED
Let us consider $N$ the number of search points,
$x_1,\dots,x_N$ the $N$ i.i.d search points,
$\epsilon(N)>0$ and $C(N)>0$
with $\epsilon(N) =o( C(N) )$,
and $\epsilon(N)$ and $C(N)$ both $\Omega(1/N)$ and $o(1)$.

Define $N_g(N)=\{ i\in \{1,\dots,N\}; || x_i - x* || \leq \sqrt{\epsilon(N)} \} $ the number of search points with norm $\leq \sqrt{\epsilon(N)}$ (i.e. ``good'' search points, with simple regret better than $\epsilon(N)$ ).

Define  $N_b(N)$ the number of search points with norm $> \sqrt{\epsilon(N)}$ but norm $\leq \sqrt{C(N)}$ (i.e. bad search points, but not very bad...), i.e. $N_b(N)\leq N-N_g(N)$. \sam{precise here if we abuse of notation or change the notation ($N_{b}(N)$ is a set) }
      
\begin{lemma}[Linear numbers of good and bad points]\label{lemmaCheb}
There exist constants $K_g>0$ and $K_b>0$, such that with probability $\geq \frac12$, 
     \begin{equation}
N_g(N) \leq K_g N\sqrt{\epsilon(N)}\label{calvin}
\end{equation}
and 
\begin{equation}
N_b(N) \geq K_b N \sqrt{C(N)}.\label{hobbes}
\end{equation}
   \end{lemma}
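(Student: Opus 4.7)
The plan is to recognize the two quantities $N_g(N)$ and $N_b(N)$ as sums of i.i.d.\ Bernoulli indicators and to control them by Chebyshev's inequality (as the label of the lemma suggests), then combine the two estimates by a union bound.

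First I would compute the first moments. Write $N_g(N) = \sum_{i=1}^N \mathbf{1}_{\{\|x_i - x^*\| \leq \sqrt{\epsilon(N)}\}}$ and analogously for $N_b(N)$. Since the $x_i$ are i.i.d.\ uniform on $[0,1]^d$ and (under the framework of the paper) $x^*$ is itself uniform on $[0,1]^d$ independent of the $x_i$'s, the single-trial probabilities $p_g = \PP(\|x_1 - x^*\| \leq \sqrt{\epsilon(N)})$ and $p_b = \PP(\sqrt{\epsilon(N)} < \|x_1 - x^*\| \leq \sqrt{C(N)})$ are purely volume computations; because $\epsilon(N), C(N) = o(1)$ the relevant balls lie well inside the cube (outside a vanishing boundary region), so $p_g = \Theta(\epsilon(N)^{d/2})$ and $p_b = \Theta(C(N)^{d/2})$ using $\epsilon = o(C)$ to absorb the inner-ball contribution. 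This gives the target scales $\E N_g = \Theta(N\sqrt{\epsilon(N)})$ and $\E N_b = \Theta(N\sqrt{C(N)})$ (with the exponent read in the convention of the statement, the $d$-dependence being swept into $K_g,K_b$).

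Next I would bound the variances. Because the indicators are independent Bernoulli, $\mathrm{Var}(N_g) = N p_g(1-p_g) \leq \E N_g$ and likewise $\mathrm{Var}(N_b) \leq \E N_b$. Chebyshev's inequality then yields, for any $t>0$,
\[
\PP(|N_g - \E N_g| \geq t) \leq \frac{\E N_g}{t^2},\qquad \PP(|N_b - \E N_b| \geq t) \leq \frac{\E N_b}{t^2}.
\]
Taking $t = \E N_g$ gives $N_g \leq 2\,\E N_g$ with probability $1 - 1/\E N_g$; taking $t = \E N_b / 2$ gives $N_b \geq \E N_b/2$ with probability $1 - 4/\E N_b$. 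The hypothesis $\epsilon(N), C(N) = \Omega(1/N)$ makes both $\E N_g$ and $\E N_b$ tend to infinity, so for $N$ large enough each failure probability is below $1/4$. A union bound then delivers both inequalities simultaneously with probability $\geq 1/2$, establishing~(\ref{calvin}) and~(\ref{hobbes}) with $K_g = 2\,\E N_g / (N\sqrt{\epsilon(N)})$ and $K_b = \tfrac12 \E N_b / (N\sqrt{C(N)})$ absorbed as constants.

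The main obstacle I anticipate is the volume estimate near the boundary of the cube: if $x^*$ were adversarial the ball around it could be clipped down to a corner-fraction of its Euclidean volume. This is handled cleanly by integrating over the uniform law of $x^*$ (which gives the plain ball volume as a lower bound up to an absolute constant depending only on $d$, since the offending corner region has measure $o(1)$), or by restricting the analysis to the event that $x^*$ lies in a fixed interior sub-cube, which has positive probability and only changes the constants $K_g, K_b$.
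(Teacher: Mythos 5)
Your proof is correct and follows essentially the same route as the paper's: write $N_g(N)$ and $N_b(N)$ as sums of i.i.d.\ Bernoulli indicators, estimate the single-trial probabilities by ball volumes, and conclude by a second-moment concentration bound (the paper uses Markov for the upper tail on $N_g$ and Chebyshev for the lower tail on $N_b$; you use Chebyshev for both and, unlike the paper, make the union bound explicit — a genuine improvement, since two events each of probability $\ge \tfrac12$ need not intersect). One caveat on the exponent: the single-trial probabilities scale as $\epsilon(N)^{d/2}$ and $C(N)^{d/2}$, so what your argument actually establishes — and what the corrected version of this lemma in the paper's supplementary material states — is $N_g \le K_g N\sqrt{\epsilon(N)}^{\,d}$ and $N_b \ge K_b N\sqrt{C(N)}^{\,d}$; a $d$ sitting in the exponent cannot be absorbed into the constants $K_g,K_b$, and for the lower bound on $N_b$ the literal $\sqrt{C(N)}$ form does not follow from the $\sqrt{C(N)}^{\,d}$ form when $d>1$. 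You are right to flag this mismatch (it is a defect of the statement, reproduced in the paper's own proof, which treats the probability as $\Theta(\sqrt{\epsilon(N)})$ independently of $d$), and your treatment of the boundary clipping of the ball around $x^*$ is likewise more careful than the paper's, which ignores it.
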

\begin{proof} 
Consider a search point $x_i$. It is ``good'', in the sense above, with probability $\Omega(\sqrt{\epsilon_N})$. This holds for each of the search points; therefore the number of good points is the sum of $N$ Bernoulli random variables with parameter $\Omega(\sqrt{\epsilon_N})$. The expectation and the variance are therefore $\Omega(N\sqrt{\epsilon_N})$.
By Chebyshev inequality, there is $\alpha>0$ such that a random variable $X$ is $O(\E X+\alpha \sqrt{Var\ X})$ with probability at least $\frac12$. This implies that the number $N_g$ of ``good'' points is 
\begin{equation}
N_g=O(N\sqrt{\e_N}+\alpha\sqrt{N\sqrt{\e_N}}).\label{bolalapouf}
\end{equation}
By the assumption $\e_N=\Omega(1/N)$, $N\sqrt{\e_N}\to\infty$; therefore
Eq. \ref{bolalapouf} implies $N_g=O(N\sqrt{\e_N})$.
The proof is similar for the number of ``bad'' points.
\end{proof}

Consider $G=\{i\in \{1,\dots,N\}; ||x_i-x^*||\leq \sqrt{\epsilon(N)}\}$ the indices of good search points, and $B=\{i\in\{1,\dots,N\}; \sqrt{C(N)} \geq ||x_i-x^*||>\sqrt{\epsilon(N)}\}$ the indices of bad search points. 

\begin{proposition}\label{zeprop}
For some $c>0$, for all $N$, with probability at least $c$,
the minimum of the noisy fitness values for the $N_g$ good points
verifies $\inf_{i \in G} y_i \geq X(1/N_g)$, and the best noisy fitness for the $N_b$
bad points verifies $\inf_{i\in B} y_i \leq C+Y(1/N_b)$.
\end{proposition}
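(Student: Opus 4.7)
The plan is to reduce each of the two inequalities to a classical extreme-value estimate for i.i.d.\ standard Gaussians, and to combine the two using the independence of the noise on the disjoint index sets $G$ and $B$. The key observation is that since $\|x_i-x^*\|^2\geq 0$ on $G$ and $\|x_i-x^*\|^2\leq C$ on $B$,
\begin{equation*}
\inf_{i\in G}y_i \;\geq\; \inf_{i\in G}\NN_i \qquad\text{and}\qquad \inf_{i\in B}y_i \;\leq\; C+\inf_{i\in B}\NN_i,
\end{equation*}
so it suffices to prove $\inf_{i\in G}\NN_i\geq X(1/N_g)$ and $\inf_{i\in B}\NN_i\leq Y(1/N_b)$ jointly with positive probability.

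Each of these is a direct computation on a min-of-$N$-Gaussians. The quantile lemma gives $\Phi(X(q))\leq q\leq\Phi(Y(q))$; conditioning on $N_g$ and $N_b$, the events $\{\inf_{i\in G}\NN_i \geq X(1/N_g)\}$ and $\{\inf_{i\in B}\NN_i \leq Y(1/N_b)\}$ therefore have conditional probability at least $(1-1/N_g)^{N_g}$ and $1-(1-1/N_b)^{N_b}$ respectively, both of which are bounded below by a positive universal constant (with classical limits $e^{-1}$ and $1-e^{-1}$; the small-$N$ cases $N_g\in\{0,1\}$ and $N_b\in\{0,1\}$ are handled by the empty-set convention for the infimum and by a direct evaluation of $\Phi(X(1))$ and $\Phi(Y(1))$). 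The degenerate event $N_b=0$ is excluded with constant probability by Lemma~\ref{lemmaCheb}, which forces $N_b\geq K_b N\sqrt{C(N)}$ with probability at least $1/2$.

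To combine the two bounds into a joint positive-probability statement, I would argue that the partition $(G,B)$ is determined by the search points $(x_i)$, which are independent of the noise sequence $(\NN_i)$. Thus, given $(G,B)$, the two extrema run over disjoint families of independent Gaussians and are therefore conditionally independent; the joint conditional probability is at least the product of the two universal constants, and after integrating over $(G,B)$ and intersecting with the Chebyshev event $\{N_b\geq 1\}$ one obtains the universal $c>0$ asserted by the proposition. The main subtlety I anticipate is exactly this uniform control of the conditional probabilities at all values of $N_g$ and $N_b$ --- in particular ensuring that $(1-1/N_g)^{N_g}$ does not degenerate at $N_g=1$ --- which is handled by the case split above rather than requiring any new idea.
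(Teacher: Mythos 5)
Your proposal is correct and follows essentially the same route as the paper's proof: bound $y_i$ below by $\NN_i$ on $G$ and above by $C+\NN_i$ on $B$, reduce to the minimum of i.i.d.\ Gaussians via the quantile bounds $X(q)\leq Q(q)\leq Y(q)$, and invoke the constants $(1-1/N)^N$ and $1-(1-1/N)^N$. If anything you are slightly more careful than the paper, which asserts the joint bound without spelling out the conditional independence of the two infima over the disjoint index sets $G$ and $B$ (its stated individual bounds $1/4$ and $1-e^{-1}$ do not yield a positive intersection probability by a union bound alone) and does not address the degenerate cases $N_g\in\{0,1\}$.
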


\begin{proof}

   Consider the case in which $N_g$
   and $N_b$ are upper and lower bounded (respectively) as explained
   in Lemma \ref{lemmaCheb}.

  This happens with probability $\geq \frac12$, by that lemma.

 Consider some $n_i$ (for $i\in \{1,2,\dots,N\}$), which are independently identically distributed according to some absolutely continuous density. 
Define $q_N$ the $\frac1N$ quantile of their common probability distribution.
Then the probability that $\inf \{n_1,n_2,\dots,n_N\}$ is less or equal to $q_N$ is, by definition, 
$1-(1-1/N)^N$. This converges to the constant $1-\exp(-1)$ as $N\to \infty$.

   Therefore, the probability that one of the $N_g$
   good points has a noise $\leq Q(1/N_g)$ is
   upper and lower bounded by a constant;
   the probability that one of the $N_b$ bad
   points
   points has a noise $\leq Q(1/N_b)$ is
   upper and lower bounded by a constant;
   these events are independent, so the
   probability that both happen simultaneously
   is a constant; these events are also
   independent of the 0.5 probability from Lemma \ref{lemmaCheb},
   so with lower bounded probability all
   these events happen simultaneously.\end{proof}


{\bf{Proof of Theorem \ref{rswn}:}}
\begin{proof}
Let us assume that the expected
  simple regret has slope $<-\beta$ for some
$1>\beta>0$; then define
 $\epsilon(N)=N^{-\beta}$
and $\alpha=\beta/k$
for some $0<k<1$, and $C(N)=N^{-\alpha}$;
then Lemma \ref{lemmaCheb} and Proposition \ref{zeprop}  implies that there is a $c>0$ such that with probability $c$, for $N$ sufficiently large,
\begin{itemize}
	\item there are much more good points than bad points, i.e.
\begin{equation}
	N_b=o(N_g)\label{proutproutprout}
\end{equation}
 	thanks to Eq. \ref{calvin} and \ref{hobbes}.
	\item all good points have noisy fitness at least $X(1/N_g)=\tilde \Theta(-\sqrt{-\log(N_g)})$;
	\item at least one bad point has fitness at most $C(N)+Y(1/N_b)=\tilde\Theta(N^{-\alpha}-\sqrt{-\log(N_b)})=\tilde\Theta(-\sqrt{-\log(N_b)})$; 
	\item therefore (by the two points above, and using Eq. \ref{proutproutprout}, one bad point is selected; \sam{when is it selected? we always keep the point with the best fitness}
\end{itemize}
so that the simple regret is larger than $\epsilon(N)$
with probability $c$ for all $N$ sufficiently
large. 

This implies that the expected simple
regret is at least $c \epsilon(N)$, which contradicts the slope $<-\beta$. \end{proof}
}

\subsection{Evolution Strategies ($ES$)}
Evolution Strategies~\cite{Rechenberg73,Schw74b} are algorithms included in the category of Evolutionary Algorithms (EAs). 
In general, EAs evolve a population until they find an optimum for the objective or fitness function. The process starts by a population randomly generated. Then the algorithm iterates creating new individuals using crossover and mutation and then evaluating this new population of offspring and selecting the ones - regarding to their fitness values - that will become the parents of the next generation. 

ES have some more specific selection and mutation processes. Usually the mutation is performed by creating new individuals starting from the parent and adding a random value to it (usually normally distributed around the parent). There are various rules for choosing the step-size.
The selection in ES is usually deterministic and rank-based. This is, the individuals  chosen to be the parents of the next generations are the ones that have the best fitness values. 

When dealing with noisy function, the sorting step of the ES is disturbed by the noise and misranking might occur. To tackle this problem, Arnold and Beyer, in \cite{abinvestigation,abnoise} propose to increase the population size. An alternative is to evaluate multiple times the same search point and average the resulting fitness values. For a given search point $x\in D$, $r$ evaluations are performed: $\left(f(x,w_1), \dots, f(x,w_r)\right)$ and the fitness value used in the comparisons is the average of these evaluations $\frac{1}{r} \sum_{i=1}^{r} f(x,w_i)$. In particular, the variance of the noise is divided by $r$. Several rules have been studied: constant \cite{bignoise3}, adaptive (polynomial in the inverse of the step-size), polynomial and exponential \cite{bignoise2} number of resamplings. A general $(\mu,\lambda)$-ES is presented in Algorithm \ref{alg:es}. 

\begin{algorithm}[H]
\caption{\label{alg:es} {\scriptsize $(\mu,\lambda)$-Evolution Strategy. The resampling function $r$  may be constant or depend on the number of iterations and possibly on the step-size. When $r=1$, the algorithm reduces to an ES without resampling. ${\cal{N}}$ is a standard Gaussian of dimension $d$.  Here, the index $n$ is the number of \emph{iterations}}}
\begin{algorithmic}[1]\scriptsize
	\State{{\bf Input:} Parameters $\mu$, $\lambda$ and resampling function $r$ }
	\State{{\bf Initialize:} Parent $\tilde x$ and Step-size $\sigma$}
	\State{{\bf Initialize:} $n\leftarrow1$}
	\While{not terminated}
		\State{{\bf Mutation step:}} $\forall i\in \{1,\dots,\lambda\}$,
		$x^{(i)}\leftarrow \tilde{x}+ \sigma\NN$\label{line:mutate}
		\State{{\bf Evaluation step:}}
		
		$\forall i\in \{1,\dots,\lambda\}$
${y^{(i)} \leftarrow\frac{1}{r(n)}\sum_{j=1}^{r(n)}f(x^{(i)}, w_{j})}$\label{line:eval}
		\State{{\bf Selection step:} Sort the population according to their fitness and select the $\mu$ best:  $(x^{(i)})^\mu$}
		\State{{\bf Update Parent:} $\tilde{x}$ from $\sigma$, $(y^{(i)})^\mu$ and $(x^{(i)})^\mu$}
		\State{{\bf Update Step-size:} $\sigma$ from $\sigma$, $(y^{(i)})^\mu$ and $(x^{(i)})^\mu$}
		\State{$n\leftarrow n+1$}
	\EndWhile
	
	\Return{$\tilde{x}$}
\end{algorithmic}
\end{algorithm}



\subsubsection{Regrets for ES without resampling}\label{subsubsec:ES}

It is known \cite{stocopti5} that when the noise strength is too big, classical evolution strategies (without reevaluations or other noise adaptation scheme) do not converge, they stagnate. \cite{BeyerMutate} experimentally shows that an ES without any adaptation to the noisy setting stagnates around some step-size and at some distance of the optimum. The divergence results suggest that the ES in this case is only as a more sophisticated version of RS.  The steps of the ES are more complicated, but not sufficiently adapted to handle the noise of the function. We propose then a Conjecture on the convergence rates for ES.  

\begin{conjecture}[Convergence rates for ES]\label{conj:es}
Evolution Strategies without a noise handling procedure have the same convergence rates as Random Search for all regrets.
\end{conjecture}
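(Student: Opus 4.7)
The plan proceeds in three stages and rests on first establishing a stagnation result for ES without resampling on the noisy sphere \eqref{noisysphere}. The main obstacle, and presumably the reason the statement appears as a conjecture, is precisely this first stage: one must show that, under a broad class of step-size update rules, the step-size $\sigma_n$ stays almost surely in some bounded interval $[\sigma_{\min},\sigma_{\max}]$ with $\sigma_{\min}>0$, and the parent $\tilde x_n$ remains within a bounded region around $x^*$. The intuition, following Arnold and Beyer, is that once $\|\tilde x_n-x^*\|$ falls below a noise-dependent threshold, the ranking of the $\lambda$ offspring by noisy fitness is essentially uniformly random, so the adaptation mechanism loses its driving signal and $\sigma$ cannot collapse to zero; conversely $\sigma$ cannot grow unboundedly because the quadratic landscape provides a consistent selection signal whenever $\|\tilde x_n-x^*\|\gg\sigma$. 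A fully generic proof requires hypotheses on the update rule, but for the $1/5$-rule or the self-adaptive rule this stagnation lemma can be made rigorous following the Arnold--Beyer line of analysis.

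Once stagnation is in place, the bound $s(ASR)=-2/d$ follows readily. Conditionally on the event that $\sigma_n$ and $\tilde x_n$ stay bounded, the search points produced at line~\ref{line:mutate} of Alg.~\ref{alg:es} over $n/\lambda$ generations stochastically dominate $n$ i.i.d.\ draws from a mixture of Gaussians whose density is positive and continuous in a neighbourhood of $x^*$. Applying the Deheuvels estimate used in the proof of Property~\ref{prop:RS} gives $ASR_n=O(n^{-2/d})$ up to polylogarithmic factors, and a matching lower bound holds for any absolutely continuous sampling scheme on a locally quadratic fitness.

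For $s(SR)$ I would transpose the argument of Theorem~\ref{rswn} to the ES: since each offspring is ranked using a single noisy evaluation, the pathological event of that theorem---a ``bad'' point being preferred to a ``good'' one thanks to a favourable noise realisation---occurs at each generation with probability bounded away from zero, so $\E(SR_n)$ cannot be $O(n^{-\beta})$ for any $\beta>0$, and the slope of $SR$ is not negative. Finally, for $RSR$, I would feed the ES into the $A_g$-construction of Section~\ref{sec:framework}: this yields $s(RSR_{A_g})\leq s(ASR_A)=-2/d$, which matches the Random Search rate, so the three slopes obtained for the ES coincide with those of Random Search, as conjectured.
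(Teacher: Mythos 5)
The statement you are proving is presented in the paper as a \emph{conjecture}: the authors give no proof, only a heuristic justification via the stagnation results of Arnold and Beyer and the observation that an ES whose adaptation mechanism is drowned by noise behaves like ``a more sophisticated version of RS.'' So the relevant question is whether your proposal actually closes the gap the authors left open, and it does not. Your Stage 1 (the stagnation lemma: $\sigma_n$ bounded away from $0$ and $\infty$, parent confined near $x^*$) is exactly the missing ingredient, and you concede that it ``can be made rigorous following the Arnold--Beyer line'' only under additional hypotheses. The Arnold--Beyer analyses are progress-rate approximations, not theorems, and the paper's own Remark~\ref{rmq:sr} stresses that even noise-free convergence of ES has no formal proof. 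Everything downstream in your argument (the Deheuvels-type bound for $ASR$, the lower bound on the selected offspring's regret for $SR$) is conditioned on this unproven lemma, so the proposal remains a conjecture-plus-programme rather than a proof. A secondary but real issue in Stage 2: the offspring are not i.i.d.\ (parent and step-size are adapted to the history), so Property~\ref{prop:RS} does not apply verbatim; you would need to replace it by an iterated-conditioning bound using a uniform lower (and, for the matching bound, upper) estimate on the conditional density near $x^*$, which again hinges on the stagnation event.

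Your final step for $RSR$ is the one concrete misapplication. The $A_g$ construction of Section~\ref{sec:framework} produces a \emph{different} algorithm, whose search points repeat each $x_n$ a polylogarithmic number of times; the inequality $s(RSR_{A_g})\leq s(ASR_A)$ is a statement about $A_g$, not about $A$. The conjecture concerns the $RSR$ of the unmodified ES, for which you must argue directly that a recommendation of quality $\Theta(n^{-2/d})$ is \emph{sustained} over $\lfloor g(k)\rfloor$ consecutive recommendations --- for RS this is plausible because the best-so-far point persists until beaten, but for an ES the parent moves every generation, so persistence is not automatic and needs its own argument (and indeed the corresponding entries in Table~\ref{maintable} are themselves left as conjectures). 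Similarly, your $SR$ step cannot simply ``transpose'' Theorem~\ref{rswn}: that proof compares the minimum noisy fitness over linearly many good and bad points drawn i.i.d.\ uniformly, whereas the ES selects among only $\lambda$ offspring per generation from an adapted Gaussian; the correct route is to show that, on the stagnation event, the selected offspring is at distance $\Omega(\sigma_{\min})$ from $x^*$ with probability bounded away from zero at every generation, which is a different (if easier) argument.
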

 
\subsubsection{Simple regret for ES with resamplings}\label{SR_ES}

We will see  that the results are more encouraging than in Section \ref{subsubsec:ES} when we consider an ES with some adaptation to mitigate the effect of the noise.  We will assume that the function $r$ (number of revaluations per point)  in Alg.~\ref{alg:es} grows polynomially or exponentially with the number of iterations.

The work in \cite{bignoise2} shows that ES that include an exponential number of revaluations converges with high probability to the optimum. The convergence rate is $s(SR) = K$ for some $K<0$ under assumptions about the convergence in ES in the noise-free case. Moreover  \cite{esareslow} shows that ES, under general conditions, must exhibit $K> -\frac12$.
There is no formal proof of an upper bound that can theoretically ensure a value or a range for $s(SR)$. 
However, the experiments on \cite{bignoise2} suggest that the $K=-\frac12$ is reached for functions with a quadratic Taylor expansion and additive noise (as in Eq.~\ref{additnoise}).
Hence we propose Conjecture~\ref{conj:esSReval}:

\begin{conjecture}[$SR$ for ES + $r$]\label{conj:esSReval}
Consider $0<\delta<1$. For some resampling parameters {{(i.e. for some revaluation function $r$)}}, Evolution Strategies with resampling (Alg.~\ref{alg:es}) satisfy $s(SR)=-1/2$ with probability $1-\delta$.
\end{conjecture}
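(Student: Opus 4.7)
The plan is to combine an exponentially growing resampling rate with the linear convergence of a scale-invariant $(\mu,\lambda)$-ES on the noise-free sphere. The core idea: each iteration of such an ES shrinks the distance to the optimum by a factor $\sqrt{\rho}\in(0,1)$, so the ``signal'' --- the typical fitness gap between two offspring of the same parent --- also decays geometrically; averaging enough independent noisy evaluations drives the effective noise below this signal at every iteration, preserving the noise-free trajectory at the cost of an exponential blow-up in the evaluation budget.

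First, I would fix a $(\mu,\lambda)$-ES with a step-size rule known to achieve linear convergence on the noise-free sphere (scale-invariant step-size, or self-adaptive under the assumptions of \cite{bignoise2}): with $t$ counting iterations, one has, almost surely, $\|\tilde x_t-x^*\|=\Theta(\rho^{t/2})$ and $\sigma_t=\Theta(\rho^{t/2})$ for some $\rho=\rho(\mu,\lambda,d)\in(0,1)$. Then set the resampling schedule $r(t)=\lceil t\,\rho^{-2t}\rceil$, so that the average of $r(t)$ independent unit-variance Gaussian samples attached to a single offspring has standard deviation $1/\sqrt{r(t)}=O(\rho^t/\sqrt{t})$, i.e.\ strictly smaller order than the typical fitness gap $\sigma_t\|\tilde x_t-x^*\|=\Theta(\rho^t)$ between two offspring of the same parent.

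Next, couple the noisy run with a noise-free run sharing the same mutation vectors. A Gaussian-tail bound then shows that the probability that the $\mu$ offspring selected by the noisy run differ from those of the noise-free run at iteration $t$ is $\exp(-\Omega(t))$, which is summable in $t$. By Borel--Cantelli, on an event $\mathcal E_\delta$ of probability at least $1-\delta$, there is a finite random $t_0(\delta)$ past which the two runs select identically, so the linear convergence inherited from the noise-free ES holds from iteration $t_0$ onwards. Finally, the evaluation count after $t$ iterations is
\begin{equation*}
n_t=\lambda\sum_{k=1}^t r(k)=\Theta\!\left(t\,\rho^{-2t}\right),
\end{equation*}
giving $t=\tfrac{1}{2}\log_{1/\rho}(n_t)+O(\log\log n_t)$, and hence on $\mathcal E_\delta$
\begin{equation*}
SR_{n_t}=F(\tilde x_t)-F(x^*)=\Theta(\rho^t)=\Theta\!\left(n_t^{-1/2}\sqrt{\log n_t}\right),
\end{equation*}
so $s(SR)=-1/2$. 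Combined with the lower bound $K\geq -1/2$ of \cite{esareslow}, this gives $s(SR)=-1/2$ with probability at least $1-\delta$.

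The main obstacle is the coupling step: one must show rigorously that the occasional misranking at iteration $t$ does not derail the long-run geometric rate. The Borel--Cantelli argument above handles this cleanly only because the misranking probability is summable --- which is precisely what motivates the factor $t$ in the resampling rule $r(t)=\lceil t\,\rho^{-2t}\rceil$; a plain $\rho^{-2t}$ would leave a noise of the same order as the signal and require a more delicate drift analysis of the ES-Markov chain. A secondary subtlety lies in Step 1, since rigorous linear convergence of $(\mu,\lambda)$-ES on the sphere is established only for specific step-size rules, which is precisely why the conjecture reads ``for some resampling parameters''.
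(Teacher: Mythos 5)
This statement is left as a \emph{conjecture} in the paper: the authors explicitly state that ``there is no formal proof of an upper bound that can theoretically ensure a value or a range for $s(SR)$,'' and Remark~\ref{rmq:sr} explains why --- even in the noise-free case, linear convergence of the ES is not formally established (\cite{TCSAnne04-corr} shows $\frac1n \log \|x_n-x^*\|$ converges to a constant, but that constant is not proved negative). Your Step~1 takes exactly this unproved ingredient as ``known'': the assertion that almost surely $\|\tilde x_t - x^*\| = \Theta(\rho^{t/2})$ with $\rho<1$ for a self-adaptive or implementable step-size rule is precisely the missing theorem. (For the idealized scale-invariant rule $\sigma_t = c\|\tilde x_t - x^*\|$ linear convergence can be shown, but that rule requires knowing $x^*$ and is not the algorithm of Alg.~\ref{alg:es}.) So your argument reduces the conjecture to another open statement rather than proving it; this is consistent with, but does not go beyond, the heuristic the paper itself gives (exponential resampling kills the noise, then invoke noise-free behavior, as in \cite{bignoise2}).

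There is also a concrete technical flaw in the coupling step. You claim the per-iteration misranking probability is $\exp(-\Omega(t))$ because the averaged noise has standard deviation $O(\rho^t/\sqrt{t})$ while the ``typical'' fitness gap is $\Theta(\rho^t)$. But the relevant gap --- between the $\mu$-th and $(\mu+1)$-th ranked offspring --- is a random variable whose distribution has positive density near zero at scale $\rho^t$; hence the probability that the gap falls below the noise scale is of order $1/\sqrt{t}$ (up to logarithms), not exponentially small, and the series is not summable, so Borel--Cantelli does not apply as stated. One can repair this by taking $r(t)=\lceil t^{a}\rho^{-2t}\rceil$ with $a>2$, or better by arguing that a misranking between offspring whose fitnesses differ by $o(\rho^t)$ is harmless (the wrongly selected point is nearly as good), but the latter requires a drift or stability analysis of the perturbed ES Markov chain --- exactly the ``more delicate'' argument you defer. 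Finally, note that \cite{esareslow} as cited gives $K>-\tfrac12$ strictly, so the matching lower bound only yields $s(SR)\geq -\tfrac12$ in the $\limsup$ sense; that part is fine, but the upper bound $s(SR)\leq -\tfrac12$ rests entirely on the two unproved steps above.
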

This conjecture applies to some ES with step-size scaling as the distance to the optimum, i.e. $\sigma_n$ used for generating the $n^{th}$ search point has the same magnitude as $\|\tilde{x}_n - x^{*}\|$ (\cite{Rechenberg73,Beyer:bookES}). \cite{BeyerMutate} has proposed variants of ES for quickening the convergence thanks to large mutations and small inheritance. Such an approach is not covered by the bound in \cite{esareslow} and it is for sure an interesting research direction - maybe it might reach slope $s(SR)=-1$.

\subsubsection{Approximate simple regret for ES with resamplings}\label{asrreval}

We have seen that an ES can reach a slope of $SR$ approximately $-\frac12$, when using resamplings.
However, $ASR$ can be better 
by slightly modifying the original ES, and therefore achieving a faster convergence rate than the real one represented by $s(SR)$
We consider an ES - called $MES+R$ for Modified ES with Resampling. Let $r_n$ be exponential in the number of iterations: $r_n=R\cdot \zeta^n$, $R>0$, $\zeta>1$. 
$MES+R$ is as in Alg. \ref{alg:es} with the following modifications. At iteration $n$:

{\bf Generation:} (Alg.~\ref{alg:es}, Line \ref{line:mutate}) 
Generate additional $r_n$ ``fake'' offspring $\{ x^{(i)f}: 1\leq i\leq r_n \}$, with the same probability distribution as the $\lambda$ offspring. They will be evaluated one time each, but they will {\emph{not}} be taken into account for the selection. Note that this means that they are part of the sequence of points considered by $ASR$, but not by $SR$.

{\bf Evaluation:} (Alg. \ref{alg:es}, Line \ref{line:eval}) Evaluate $r_n$ times each ``true'' offspring $\{ x^{(i)} : 1\leq i\leq\lambda \}$ to obtain their corresponding fitness value $y^{(i)}$. Evaluate one time each ``fake'' offspring. 
Therefore, performing $(\lambda+1)r_n$ function evaluations in each iteration.

Then, under some reasonable convergence assumptions which are detailed in theorem \ref{thm:asr} below, the $ASR$ reaches a faster rate: $s(ASR)=-1/2-2/d$ with high probability.


\begin{theorem}\label{thm:asr}
Consider $0<\delta<1$.
Consider an objective function $F(x)=\|x\|^2$, where $x\in \R^d$.
Consider a $MES+R$ as described previously. Assume that:
\begin{enumerate}[label=(\roman*)]
\item $\sigma_n$ and $\|\tilde{x}_n\|$ have the same order of magnitude: 
\begin{equation}\label{xsigma}
\|\tilde{x}_n\|=\Theta(\sigma_n).
\end{equation}
\item\label{logalso} $\log-\log$ convergence occurs for the $SR$: 
\begin{equation}\label{loglog}
\frac{\log(\|\tilde{x}_n\|)}{\log(n)}\underset{n\rightarrow+\infty}{\longrightarrow }-\frac12 ~\mbox{ with probability $1-\delta$,}
\end{equation}
\end{enumerate}

Then, with probability at least  $1-\delta$, $s(ASR)=-1/2-2/d$.
\end{theorem}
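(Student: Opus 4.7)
The proof rests on two preliminary observations. First, because $r_n = R\zeta^n$ grows exponentially, the total number of function evaluations after iteration $n$ is $N_n = (\lambda+1)\sum_{k\leq n} r_k = \Theta(r_n)$, so $\log r_k$ and $\log N_k$ agree up to an additive constant and we may pass freely between them when computing log-log slopes. Second, at iteration $k$ the $r_k$ fake offspring are i.i.d.\ samples from $\mathcal{N}(\tilde x_k,\sigma_k^2 I_d)$, and assumption~(i) forces the center to lie at distance $\Theta(\sigma_k)$ from the optimum $x^* = 0$, so the density of this Gaussian at $x^*$ equals $(2\pi\sigma_k^2)^{-d/2}\exp(-\|\tilde x_k\|^2/(2\sigma_k^2)) = \Theta(\sigma_k^{-d})$ and is essentially constant throughout the ball of radius $\sigma_k$ around $x^*$.

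A single fake offspring therefore lies within distance $\epsilon\leq \sigma_k$ of $x^*$ with probability $\Theta((\epsilon/\sigma_k)^d)$. Substituting $\epsilon=c\sigma_k r_k^{-1/d}$ into the order-statistics identity $P(\min_i \|x^{(i)f}\|>\epsilon)=(1-\Theta(c^d/r_k))^{r_k}$ shows that $\min_i \|x^{(i)f}\| = \sigma_k r_k^{-1/d}$ up to polylogarithmic factors; choosing $c$ polylogarithmic in $k$ (large for the upper tail, small for the lower tail) and taking a union bound over iterations yields this two-sided estimate simultaneously for every $k$ with total failure probability at most $\delta$.

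Squaring, the minimum of $F$ among the fake offspring at iteration $k$ is $\sigma_k^2 r_k^{-2/d}$ up to polylog factors. Combining assumption~(i) with the simple-regret rate from assumption~(ii) gives $\sigma_k^2=\Theta(\|\tilde x_k\|^2)= N_k^{-1/2+o(1)}$, while $r_k^{-2/d}=N_k^{-2/d+o(1)}$ by the first observation, so $\min_i F(x^{(i)f}) = N_k^{-1/2-2/d+o(1)}$. This quantity is monotone decreasing in $k$ and dominates the $F$-values of the $\lambda$ true offspring at any iteration (which are merely $\Theta(\sigma_k^2)=N_k^{-1/2+o(1)}$), so the global minimum over all search points up to evaluation $N$ is attained at the most recent iteration, giving $ASR_N = N^{-1/2-2/d+o(1)}$ and hence $s(ASR)=-1/2-2/d$ with probability at least $1-\delta$.

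The main technical obstacle is the lower-tail concentration of $\min_i \|x^{(i)f}\|$: the pointwise density bound only yields $P(\min_i \|x^{(i)f}\|<c\sigma_k r_k^{-1/d})\lesssim c^d$, so to make the failure probabilities summable over all iterations one must take $c_k\to 0$, but slowly enough (polylogarithmically in $k$) that the log-log slope is unaffected. Ensuring this together with the matching upper-tail bound, and keeping the implicit constants in the density estimate uniform across iterations through the bounded ratio $\|\tilde x_k\|/\sigma_k$ from assumption~(i), is what makes the argument work; the rest is routine bookkeeping.
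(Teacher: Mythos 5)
Your argument follows essentially the same route as the paper's: index by iterations, use $e_n=(1+\lambda)\sum_{k\le n}r_k=\Theta(r_n)$ from the exponential resampling schedule, bound the offspring density at the optimum by $\Theta(\sigma_k^{-d})$ via assumption (i), and deduce that among the $r_k$ fake offspring one falls within $\epsilon=\Theta(\sigma_k r_k^{-1/d})$ of $x^*$, giving $ASR = O(e_n^{-1/2-2/d})$. Where you go beyond the paper is the matching lower bound: the paper's proof stops at $s(ASR)\le -1/2-2/d$ (despite the theorem asserting equality), whereas you also argue via order statistics and a union bound over iterations that no search point gets closer than $\sigma_k r_k^{-1/d}$ up to subpolynomial factors, which is needed to conclude $s(ASR)=-1/2-2/d$ rather than an inequality. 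One imprecision in that extra piece: for the lower tail you have $\mathbb{P}\bigl(\min_i\|x^{(i)f}\|<c_k\sigma_k r_k^{-1/d}\bigr)\lesssim c_k^d$, and taking $c_k$ decaying \emph{polylogarithmically in the iteration index} $k$ does not make $c_k^d$ summable ($\sum_k (\log k)^{-ad}$ diverges); you need $c_k$ decaying polynomially in $k$, which is still harmless because $k=\Theta(\log N_k)$, so a polynomial-in-$k$ factor is polylogarithmic in the evaluation count and contributes $o(1)$ to the log-log slope. With that correction (and noting the union-bound failure probability must be folded into the $1-\delta$ from assumption (ii)), your argument is sound and in fact more complete than the one in the paper.
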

\begin{proof} See supplementary material. \end{proof}

\begin{remark}\label{rmq:sr}
The assumption of $s(SR)=-1/2$ is based on the convergence of ES in the noise-free case and it is essential to prove Theorem~\ref{thm:asr}. This rate of convergence can be proved when the ES converges in the noise-free case (details on  \cite{bignoise2}).
But the convergence of ES has not been formally proved, not even for the noise-free case. There is an important element given in \cite{TCSAnne04-corr}, showing that $\frac1n \log ||x_n-x^*||$ converges to some constant, but this constant is not proved negative. Furthermore, parameters ensuring convergence in the noisy case are unspecified in \cite{bignoise2}. 
\end{remark}

\section{Stochastic Gradient Descent}\label{sec:stoc}
For the group of Stochastic Gradient Descent Algorithms, we consider the ones presented by Shamir \cite{shamir} and Fabian \cite{fabian} which approximate the gradient of the objective function. We will denote them Shamir algorithm and Fabian algorithm respectively. Both of them approximate the gradient of the function using function evaluations by different methods, therefore they remain in the black-box category. Fabian algorithm uses the average of redundant finite differences and Shamir algorithm a one point estimate gradient technique. 

The convergence rates in terms of $SR$ are proved in~\cite{shamir} and ~\cite{fabian}. For Shamir it is shown that $s(SR)=-1$ in expectation for quadratic functions.
Fabian ensures a rate $s(SR)=-1$ approximately and asymptotically only for limit values of parameters. However, it requires only smooth enough functions, so the class of functions is wider than the one considered in~ \cite{shamir}. This rate $s(SR)=-1$ has been proved tight in ~\cite{chen1988}. Hence, Shamir and Fabian  algorithm are  faster than ES's, which cannot do better than $s(SR)=-\frac12$, at least under their usual form~\cite{esareslow}. 

\subsection{Shamir's quadratic algorithm}

Shamir algorithm presented in~\cite{shamir} for quadratic functions  is Algorithm \ref{alg:shamir}.
\begin{algorithm}
	\caption{ {\scriptsize \label{alg:shamir} Shamir Algorithm for Quadratic functions. $\Pi_{W} $ represents the projection over the space $W$} }
\begin{algorithmic}[1]\scriptsize
	\State{{\bf Input:} Parameters $\lambda$ and $\e$}
	\State{{\bf Initialization:} $\hat{x}_1\leftarrow 0$, $n\leftarrow 1$}
	\While{not terminated}
		\State{Pick $r\in \{-1,1\}^{d}$ uniformly at random}
		\State{$x_{n}\leftarrow\hat{x}_{n}+\frac{\e}{\sqrt{d}}r$}
		\State{{\bf Evaluate:} $v\leftarrow f(x_n,w)$}
		\State{$\hat{g}\leftarrow\frac{\sqrt{d}v}{\e}r$}
		\State{$\hat{x}_{n+1}\leftarrow\Pi_{W}\left(\hat{x}_{n}+\frac{1}{\lambda n}\hat{g} \right)$}
		\State{{\bf Recommend:} $\tilde{x}_n\leftarrow\lceil\frac{2}{n}\rceil\sum_{k=\lceil n/2\rceil}^{n} \hat{x}_{k}$}
		\State{$n\leftarrow n+1$}
	\EndWhile

	\Return{$\tilde{x}_{n}$}
\end{algorithmic}
\end{algorithm}

One of the key points in Alg.~\ref{alg:shamir} are that there is only one evaluation per iteration (somehow in the spirit of Simultaneous Perturbation Stochastic Approximation SPSA~\cite{spall00adaptive,beynoise}). The second important point is that the expectation of the distance between search points and recommendations is constant, which implies that the search points do not converge towards the optimum! This is not a problem for the convergence in terms of $SR$, when search points $x_n$ and recommendations $\tilde{x}_n$ are distinguished, but it makes a difference for  $ASR$. 

Shamir algorithm has an optimal convergence rate in expectation ($s(SR)=-1$) for quadratic functions. This fact should be acknowledge by any other regret  used to evaluate its performance which aims to aproximate the $SR$.
But intuitively in the framework of Shamir algorithm, the $s(ASR)$ is presumably a bad approximation of $s(SR)$  
due to the queries at a constant distance of the current recommendation. This convergence rate in terms of   $s(ASR)$ could not be obtained from the results in ~\cite{shamir}. Nonetheless, we prove in a general way that as long as the results for Shamir algorithm are satisfied \emph{almost surely}, then $s(ASR)=0$ a.s. Therefore we present the latter result in  Theorem ~\ref{thm:aSRgradient} and a conjecture on the convergence rate of $s(ASR)$ in expectation for Shamir algorithm.

\begin{theorem}
\label{thm:aSRgradient}
Assume that the optimum $x^*$ is unique and that $(\tilde x_{n})$ is a sequence of recommendation points converging a.s. to $x^*$. Assume that the sequence of evaluation points $(x_n)$ is such that $\forall n, x_n\neq x^*$ a.s. and that $ \|x_n-\tilde{x}_n\| $ is constant.
Then, a.s.
\begin{equation*}
s(ASR)=0.
\end{equation*}
\end{theorem}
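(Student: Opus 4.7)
The plan is to exploit the fact that the evaluation points $x_n$ stay at a fixed distance $c := \|x_n - \tilde{x}_n\|$ from the recommendations. Since $\tilde{x}_n \to x^*$ almost surely, the triangle inequality forces $\|x_n - x^*\|$ to stay close to $c$, so the evaluation points do \emph{not} approach $x^*$. This will imply $ASR_n$ is bounded below by a positive constant from some index onward, which is the heart of the argument.

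More concretely, I would proceed in four short steps. First, fix a sample path on which $\tilde{x}_n \to x^*$ and $x_m \neq x^*$ for all $m$; this event has probability one by countable union. Second, using $\|x_n - x^*\| \geq c - \|\tilde{x}_n - x^*\|$, conclude that for any $\eta>0$ there is $N(\eta)$ such that $\|x_n - x^*\| \geq c - \eta$ for all $n \geq N(\eta)$, and hence $F(x_n) - F(x^*) = \|x_n - x^*\|^2 \geq (c-\eta)^2$ for $n \geq N(\eta)$. Third, combine this tail bound with the finitely many early terms: set
\[
\gamma := \min\Bigl(\min_{m \leq N(\eta)}\bigl(F(x_m) - F(x^*)\bigr),\ (c-\eta)^2\Bigr),
\]
which is strictly positive almost surely, since each of the finitely many terms $F(x_m)-F(x^*)$ is positive on our event. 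Then for every $n \geq N(\eta)$,
\[
ASR_n = \min_{m \leq n}\bigl(F(x_m) - F(x^*)\bigr) \geq \gamma > 0.
\]
Fourth, note also the trivial upper bound $ASR_n \leq ASR_1 = F(x_1) - F(x^*) < \infty$ a.s., so $\log ASR_n$ is bounded above and below by (random) constants. Dividing by $\log n$ and letting $n \to \infty$ gives
\[
s(ASR) = \limsup_{n \to \infty} \frac{\log ASR_n}{\log n} = 0 \quad \text{a.s.}
\]

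There is no real obstacle here; the argument is essentially a triangle-inequality observation plus the definition of $s(\cdot)$. The only point requiring a little care is handling the finitely many initial evaluations: one has to invoke $x_m \neq x^*$ a.s. (together with countable additivity) to guarantee that the early minimum is strictly positive on a full-measure event, rather than just positive in expectation. Once that is in place, the monotonicity of $ASR$ does all the remaining work.
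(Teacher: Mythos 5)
Your proof is correct and follows essentially the same route as the paper's: the triangle inequality keeps $\|x_n-x^*\|$ bounded away from zero, so $ASR_n$ is bounded below by a positive (random) constant and the log-log slope is $0$. You are in fact more careful than the paper, which silently skips the point you flag about the finitely many initial terms requiring $x_m\neq x^*$ a.s.; the only implicit assumption you share with the paper is that the constant distance $\|x_n-\tilde{x}_n\|$ is strictly positive.
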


\begin{proof}
$\tilde x_n$ converges almost surely to the optimum and $x_n$ is at a constant distance from $\tilde x_n$.
Therefore the distance between $x_n$ and the optimum converges to a constant.
This implies that the minimum $\min_{i=1}^n \|x_i-x^*\|^2$ is lower bounded by some constant.
Therefore $s(ASR)=0$.
\end{proof}


\begin{conjecture}[$ASR$ for the Shamir algorithm]
	 Shamir algorithm also verifies $s(ASR)=0$ a.s. on quadratic functions.
\end{conjecture}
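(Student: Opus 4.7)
The plan is to reduce the conjecture to (a mild variant of) Theorem~\ref{thm:aSRgradient}. In Alg.~\ref{alg:shamir} the search point is $x_n=\hat x_n+\frac{\e}{\sqrt d}r$ with $r\in\{-1,1\}^d$, hence $\|x_n-\hat x_n\|=\e$ is a \emph{deterministic} constant. Note that the ``center of the probing'' is the current iterate $\hat x_n$, not the output recommendation $\tilde x_n$ (which is a Cesàro average of past iterates), so Theorem~\ref{thm:aSRgradient} cannot be applied verbatim; however its proof transfers immediately once $\hat x_n$ plays the role of $\tilde x_n$, and the conjecture follows as long as we can show $\hat x_n\to x^*$ almost surely.

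First I would establish this a.s.\ convergence. Shamir~\cite{shamir} proves $\E\,F(\tilde x_n)-F(x^*)=O(1/n)$ (up to logarithmic factors) on strictly convex quadratics; by strong convexity this yields $\E\,\|\tilde x_n-x^*\|^2=O(1/n)$, and the same type of stochastic-approximation bound applies to $V_n=\|\hat x_n-x^*\|^2$ itself. To lift the in-expectation bound to an almost-sure statement, two standard routes seem available: either a Robbins--Siegmund supermartingale argument applied to $V_n$, using the projected update, the diminishing stepsize $1/(\lambda n)$, and the bounded second moment of the one-point gradient estimator $\hat g=\sqrt d\, v\, r/\e$ on the bounded feasible set $W$; or Markov's inequality applied along a geometric subsequence to $\E V_n=O(1/n)$ together with Borel--Cantelli, then filling in between subsequence indices using the one-step control $\|\hat x_{n+1}-\hat x_n\|^2=O(1/n^2)$.

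Second, granted $\hat x_n\to x^*$ a.s., the reverse triangle inequality gives
\begin{equation*}
\|x_n-x^*\| \ \geq\ \e-\|\hat x_n-x^*\| \ \xrightarrow[n\to\infty]{\text{a.s.}}\ \e \ >\ 0,
\end{equation*}
so on an event of full probability there exists a finite random index $N$ beyond which $F(x_n)-F(x^*)=\|x_n-x^*\|^2\geq \e^2/2$. For each of the finitely many indices $m\leq N$, one has $x_m\neq x^*$ a.s.: once the Gaussian noise $w$ has entered the update, the law of $\hat x_m$ is absolutely continuous with respect to Lebesgue measure on $W$, while $r$ takes only $2^d$ values, so $\PP(x_m=x^*)=0$. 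Hence $\min_{m\leq N}(F(x_m)-F(x^*))>0$ a.s.

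Combining the two bounds, $ASR_n=\min_{m\leq n}F(x_m)-F(x^*)$ is a.s.\ bounded below by a strictly positive random constant for all $n$, so $\log(ASR_n)/\log(n)\to 0$ a.s., yielding $s(ASR)=0$ a.s. The main obstacle is clearly the first step: upgrading Shamir's in-expectation guarantee to almost-sure convergence of the iterates $\hat x_n$. The difficulty is that the one-point estimator $\hat g$ has variance bounded away from zero and the stepsize $1/(\lambda n)$ is only borderline $\ell^2$-summable, so neither the Robbins--Siegmund route nor the Borel--Cantelli route is entirely off-the-shelf; some care is needed to keep the constants uniform over the projected trajectory in $W$.
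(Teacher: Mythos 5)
The statement you are proving is left as a \emph{conjecture} in the paper: the authors prove only the conditional reduction (Theorem~\ref{thm:aSRgradient}: if the iterates converge a.s.\ and the probing offset is constant, then $s(ASR)=0$ a.s.), and explicitly decline to verify the hypothesis of almost-sure convergence for Shamir's algorithm, whose published guarantees are in expectation. Your proposal reproduces exactly that reduction --- correctly noting the subtlety that the probe is centered at the iterate $\hat x_n$ rather than the averaged recommendation $\tilde x_n$, and correctly handling the finitely many early indices via $\PP(x_m=x^*)=0$ --- but the load-bearing step, the a.s.\ convergence $\hat x_n\to x^*$, is only sketched, and you yourself flag it as not off-the-shelf. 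As written, therefore, the proposal does not close the gap that caused the authors to state this as a conjecture; it relocates the entire difficulty into an unproven first step.

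Two concrete remarks on that step. First, your Borel--Cantelli variant is broken as stated: along a geometric subsequence $n_k=2^k$ the interpolation error $\sum_{n=n_k}^{n_{k+1}}\|\hat x_{n+1}-\hat x_n\|=\sum O(1/n)$ over a block of length $\Theta(n_k)$ is $\Theta(1)$, not $o(1)$, so convergence along the subsequence does not propagate to the full sequence; you would need a polynomially spaced subsequence (e.g.\ $n_k=k^2$) and separate control of the Gaussian noise terms within each block. Second, your pessimism about Robbins--Siegmund is misplaced and in fact that route is the one most likely to work: with step size $a_n=1/(\lambda n)$ one has $\sum_n a_n^2<\infty$ (this is comfortably, not ``borderline'', square-summable), the one-point estimator $\hat g$ is conditionally unbiased for $\nabla F(\hat x_n)$ on quadratics, and its conditional second moment is uniformly bounded on the compact set $W$; nonexpansiveness of $\Pi_W$ then gives $\E[V_{n+1}\mid\mathcal F_n]\le V_n-2a_n\langle\nabla F(\hat x_n),\hat x_n-x^*\rangle+a_n^2G^2$, and the Robbins--Siegmund lemma combined with $\sum a_n=\infty$ and strong convexity yields $V_n\to0$ a.s. If you carry that computation out explicitly (checking the unbiasedness of $\hat g$ under $r$ uniform on $\{-1,1\}^d$ and the uniform moment bound), your argument becomes a complete proof of the conjecture; without it, you have only restated Theorem~\ref{thm:aSRgradient} with the hard hypothesis assumed.
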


\subsection{Fabian Algorithm}


Algorithm \ref{alg:fabian} presents the algorithm studied in \cite{fabian}. Unlike Algorithm~\ref{alg:shamir}, Fabian algorithm performs several evaluations per iteration, and the distance between search point and recommandation decreases. 

\begin{algorithm}
\caption{ {\scriptsize \label{alg:fabian} Fabian Algorithm. $e_i$ is the $i^{th}$ vector of the standard orthogonal basis of $\R^d$ and $e_{1,s/2}$  is the $1^{st}$ vector of the standard orthogonal basis of $\R^{s/2}$. $v_i$ is the $i^{th}$ coordinate of vector $v$. ($\hat{x}_{i}$) is the $i^{th}$ coordinate of intermediate points ($\hat{x}$). ($x^{(i,j)+}$) and ($x^{(i,j)-}$) are the search points and $\tilde{x}$ is the recommendation. Here, the index $n$ is the number of \emph{iterations}.} }
\begin{algorithmic}[1]\scriptsize
	\State{{\bf{ Input:}} An even integer $s>0$. Parameters $a$, $\alpha$, $c$, $\gamma$.}
	\State{{\bf Initialization:}\\
	 $u_{i} \leftarrow \frac{1}{i},\ \forall\ i\in\{1,\dots,s/2\}$\\
	 Matrix $U\leftarrow\left( \| u_{j}^{2i-1} \| \right)_{1\leq i,j \leq s/2}$\\
	 Vector $v\leftarrow\frac12 U^{-1}e_{1,s/2}$\\
	$\tilde{x}\leftarrow x\in [0,1]^{d}$ uniformly at random\\
	$n \leftarrow 1$}
	\While{not terminated}
		\State{$a_n \leftarrow\frac{a}{n^{\alpha}}$, $c_n\leftarrow\frac{c}{n^{\gamma}}$}
		\State{$\forall j\in\{1,\dots,s/2\}$,\ $\forall i\in\{1,\dots,d\}$\\
		{~~~~\bf Evaluate:} 
		\begin{eqnarray*}
		x^{(i,j)+}\leftarrow \tilde{x} + c_n u_{j}e_{i} \qquad
		x^{(i,j)-}\leftarrow \tilde{x} - c_n u_{j}e_{i}
		\end{eqnarray*}}
		\State{$\hat{x}_{i}\leftarrow\frac{1}{c_n} \sum_{j=1}^{s/2} v_j \left(f(x^{(i,j)+})-f(x^{(i,j)-})\right)$}
		\State{{\bf Recommend:} $\tilde{x}\leftarrow\tilde{x}-a_{n}\hat{x}$}
		\State{$n\leftarrow n+1$}
	\EndWhile

	\Return{$\tilde{x}$}
\end{algorithmic}
\end{algorithm}

The work in  \cite{fabian} gives the convergence rate in terms of $SR$. The result is presented here as Theorem \ref{thm:fabSR}. The value of the $s(SR)$ depends on the parameters of the algorithm and it is ensured a.s.

\begin{theorem}[Simple Regret of Fabian algorithm]\label{thm:fabSR}
Let $s$ be an even positive integer and $F$ be a function {${(s+1)}$-times differentiable} in the neighborhood of its optimum $x^*$. Assume that its Hessian and its $(s+1)^{th}$ derivative are bounded in norm. Assume that the parameters given in input of Algorithm \ref{alg:fabian} satisfy: $a>0$, $c>0$, $\alpha= 1$, $0<\gamma<1/2$ and $2\lambda_0 a >\beta_0$ where $\lambda_0$ is the smallest eigenvalue of the Hessian. Let ${\beta_0=\min\left(2 s \gamma, 1 - 2 \gamma \right)}$. Then, a.s.:
\begin{equation}\label{fabiantropfort}
n^{\beta}(\tilde{x}_n-x^*) \rightarrow 0\ \forall\ \beta <\beta_0/2
\end{equation}
In particular, when $F$ is smooth enough, we get ${s(SR)=-2\beta}$.
\end{theorem}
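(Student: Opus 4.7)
The plan is to reduce the statement to the classical result of Fabian~\cite{fabian} and then translate the resulting convergence of iterates into a convergence rate for the Simple Regret.

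First, I would verify that Algorithm~\ref{alg:fabian} is exactly an instance of the stochastic approximation scheme analysed in~\cite{fabian}. The weight vector $v=\tfrac{1}{2}U^{-1}e_{1,s/2}$ is designed so that the symmetric finite-difference combination $\frac{1}{c_n}\sum_{j=1}^{s/2} v_j\,(f(x^{(i,j)+})-f(x^{(i,j)-}))$ applied to a smooth $F$ produces an estimator of the $i$-th partial derivative with a bias of order $c_n^{s}$. This follows from Taylor-expanding $F$ around $\tilde x$ up to order $s+1$: even-order terms cancel by antisymmetry, and the construction of $U$ (a Vandermonde-like matrix in the offsets $u_j=1/j$) makes $v$ annihilate all surviving odd coefficients of order $\leq s-1$, leaving only the remainder controlled by the bound on the $(s+1)$-th derivative.

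Second, plugged into the update $\tilde x \leftarrow \tilde x - a_n \hat x$, the iteration fits Fabian's framework with $a_n = a/n$ and $c_n=c/n^{\gamma}$. The noise contribution to $\hat x$ has variance of order $c_n^{-2}=n^{2\gamma}$ (from the $1/c_n$ rescaling applied to the standard Gaussian noise of each evaluation), while the bias is of order $c_n^{s}=n^{-s\gamma}$. Balancing these two error sources against the contraction induced by the Hessian is what produces the rate exponent $\beta_0=\min(2s\gamma,\,1-2\gamma)$; the condition $2\lambda_0 a > \beta_0$ guarantees that the linear contraction rate dominates the achievable convergence rate $\beta_0/2$. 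Applying the main theorem of~\cite{fabian} then yields almost surely $n^{\beta}(\tilde x_n - x^*)\to 0$ for every $\beta<\beta_0/2$.

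Finally, to translate the iterate rate into a Simple Regret rate, I would use that $F$ has a bounded Hessian near $x^*$ and that $\nabla F(x^*)=0$, so a second-order Taylor expansion yields $F(\tilde x_n)-F(x^*)=O(\|\tilde x_n - x^*\|^2)$. Combined with the previous step this gives $SR_n = o(n^{-2\beta})$ a.s.\ for every $\beta<\beta_0/2$, hence $s(SR)\leq -2\beta$, and under sharpness of the Fabian rate one gets $s(SR)=-2\beta$. The main obstacle is the bias/variance bookkeeping inside Fabian's argument: verifying that the weights $v$ really kill all odd Taylor contributions up to order $s-1$, and checking that noise variance, bias and contraction combine to give exactly the exponent $\beta_0$. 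Everything else is a direct invocation of~\cite{fabian} and a standard quadratic bound near the minimum.
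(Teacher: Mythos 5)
Your proposal is correct and matches the paper's treatment: the paper presents this theorem as a restatement of the main result of \cite{fabian} and offers no independent proof, so your reduction to Fabian's stochastic-approximation theorem (weights $v$ cancelling the odd Taylor terms up to order $s-1$, bias $O(c_n^s)$ versus variance $O(c_n^{-2})$ balancing to give $\beta_0=\min(2s\gamma,1-2\gamma)$ under $2\lambda_0 a>\beta_0$) is exactly the intended argument. Your final step, converting $n^{\beta}(\tilde x_n-x^*)\to 0$ into $s(SR)\leq -2\beta$ via the bounded Hessian and $\nabla F(x^*)=0$, is the standard quadratic bound the paper implicitly relies on for the last sentence of the statement.
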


\begin{remark}
Note that $s(SR)$ optimal when $\gamma = \frac{1}{2}(s+1)^{-1}$. In this case, $\beta_0=\frac{s}{s+1} \underset{s\rightarrow \infty}{\rightarrow} 1$. $\beta_0$ can be made arbitrarily close to $1$, so $2\beta$ also, but then $\gamma$ goes to $0$. Hence we get the values of Table \ref{maintable}, with $2\beta=1-e$, $e>0$ and close to $0$.
\end{remark}
This shows that the Fabian algorithm can have $s(SR)$ arbitrarily close to $-1$, which is optimal. As in the case of Shamir, this optimal performance should be captured by the regret used to evaluate the algorithm. 
Unfortunately, this is not the case, as detailed in Theorem \ref{thm:fabASR}.


\begin{theorem}[$ASR$ of Fabian algorithm]\label{thm:fabASR}
Let $F$ be a $\lambda$-convex and $\mu$-smooth function corrupted by an additive noise with upper bounded density and with optimum randomly drawn according to a distribution with upper bounded density. Then, a.s.,
\begin{equation*}
s(ASR)= -\min(2\beta,2\gamma).
\end{equation*}
\end{theorem}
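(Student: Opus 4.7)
The plan is to reduce the statement to the slope of $\min_{m\leq n}\|x_m-x^*\|^2$ and then to control this minimum by splitting into two regimes according to whether the perturbation $c_k u_j$ or the residual $\|\Delta_k\|$ (with $\Delta_k = \tilde{x}_k - x^*$) dominates asymptotically. By $\lambda$-convexity and $\mu$-smoothness one has
\[
\frac{\lambda}{2}\min_{m\leq n}\|x_m-x^*\|^2 \;\leq\; ASR_n \;\leq\; \frac{\mu}{2}\min_{m\leq n}\|x_m-x^*\|^2,
\]
so $s(ASR)$ is the slope of this geometric minimum. Since the search points satisfy $x_k^{(i,j)\pm} = \tilde{x}_k \pm c_k u_j e_i$, expanding coordinatewise and optimizing over the sign and over $i$ yields, for each $j$,
\[
\min_{i,\pm}\|x_k^{(i,j)\pm}-x^*\|^2 \;=\; (M_k-c_k u_j)^2 \;+\; \bigl(\|\Delta_k\|^2-M_k^2\bigr),
\]
where $M_k=\max_i|(\Delta_k)_i| \in [\|\Delta_k\|/\sqrt d,\,\|\Delta_k\|]$ and the parenthesis is nonnegative. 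The upper bound $s(ASR)\leq -\min(2\beta,2\gamma)$ is then immediate: at $k=n$ and $j=s/2$ the triangle inequality gives $\|x_n^{(i,s/2)\pm}-x^*\|\leq\|\Delta_n\|+2c_n/s=O(n^{-\beta})+O(n^{-\gamma})=O(n^{-\min(\beta,\gamma)})$, invoking Theorem~\ref{thm:fabSR} for $\|\Delta_n\|$.

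For the matching lower bound I split by the asymptotic ratio $\rho_k=\|\Delta_k\|/c_k$. In Regime A ($\gamma<\beta$, so $\rho_k\to 0$), eventually $M_k<c_k u_{s/2}=2c_k/s$; the minimum over $j$ of $(M_k-c_ku_j)^2$ is then attained at $j=s/2$ and is $\geq(1-o(1))(2c_k/s)^2$. Hence $\min_{i,j,\pm}\|x_k^{(i,j)\pm}-x^*\|^2\geq\Omega(c_k^2)$, and monotonicity of $c_k$ yields $\min_{m\leq n}\|x_m-x^*\|^2\geq\Omega(c_n^2)=\Omega(n^{-2\gamma})$, so $s(ASR)\geq-2\gamma$. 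In Regime B ($\gamma>\beta$, so $\rho_k\to\infty$), eventually $M_k\geq\|\Delta_k\|/\sqrt d\gg c_k$; the minimum of $(M_k-c_ku_j)^2$ is attained at $j=1$ and is $\geq(1-o(1))M_k^2$, so after summing with $\|\Delta_k\|^2-M_k^2$ we obtain $\min_{i,j,\pm}\|x_k^{(i,j)\pm}-x^*\|^2\geq(1-o(1))\|\Delta_k\|^2$. Combined with a matching slope-scale lower bound $\|\Delta_m\|=\Omega(m^{-\beta})$, this gives $s(ASR)\geq-2\beta$. Together, $s(ASR)=-\min(2\beta,2\gamma)$ a.s.

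The main obstacle is Regime B, since Theorem~\ref{thm:fabSR} only supplies the upper bound $\|\Delta_m\|=O(m^{-\beta})$ and not the matching lower bound in the slope sense. This is precisely where the density hypotheses enter: the upper-bounded density of the noise together with the random optimum $x^*$ drawn according to an upper-bounded density distribution provide the almost-sure genericity needed to prevent $\Delta_k$ from being axis-aligned (which would collapse the residual term $\|\Delta_k\|^2-M_k^2$ and could allow some $x_k^{(i,j)\pm}$ to coincide with $x^*$) and to forbid accidental trajectories with $\|\Delta_m\|=o(m^{-\beta})$. Once the lower bound $\|\Delta_m\|=\Omega(m^{-\beta})$ is available almost surely, the Regime B step closes and the two regimes combine to give the claimed equality.
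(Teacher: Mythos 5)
Your overall strategy coincides with the paper's: sandwich $ASR_n$ between $\tfrac{\lambda}{2}$ and $\tfrac{\mu}{2}$ times $\min_{m\le n}\|x_m-x^*\|^2$, write each search point as $\tilde{x}_k\pm c_k u_j e_i$, and decide which of $c_k$ or $\|\tilde{x}_k-x^*\|$ dominates according to the sign of $\beta-\gamma$. Your coordinatewise identity $\min_{i,\pm}\|x_k^{(i,j)\pm}-x^*\|^2=(M_k-c_ku_j)^2+(\|\Delta_k\|^2-M_k^2)$ is correct and is in fact a genuine refinement over the paper's argument, which simply declares that ``the main term'' wins and never rules out cancellation when the perturbation $c_ku_je_i$ happens to be nearly opposite to an axis-aligned $\Delta_k$. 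Your upper bound $s(ASR)\le-\min(2\beta,2\gamma)$ and your Regime A (where $\|\Delta_k\|=O(k^{-\beta})=o(c_k)$ follows from Theorem~\ref{thm:fabSR}, so the minimum is $\Omega(c_n^2)$) are sound.

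The problem is Regime B, and you have correctly located it but not closed it. The chain of inequalities there ends with $\min_{i,j,\pm}\|x_k^{(i,j)\pm}-x^*\|^2\ge(1-o(1))\|\Delta_k\|^2$, which is only useful if $\min_{m\le n}\|\Delta_m\|^2$ itself has slope $-2\beta$; because $ASR$ is a running minimum, even a sparse subsequence of anomalously accurate recommendations ($\|\Delta_{m_k}\|\ll m_k^{-\beta}$ along some $m_k\to\infty$, occurring densely enough) would drag $s(ASR)$ strictly below $-2\beta$. Theorem~\ref{thm:fabSR} supplies only the one-sided bound $n^{\beta}\Delta_n\to 0$, and your appeal to the density hypotheses does not deliver the missing direction: an upper-bounded density for the noise and for $x^*$ yields the qualitative statement $\tilde{x}_m\ne x^*$ almost surely for every $m$ (which is all the paper uses it for), and at best, via Borel--Cantelli on the conditional density of $\tilde{x}_{m+1}$ given the past, a polynomial lower bound $\|\Delta_m\|\ge m^{-K}$ for some $K$ that there is no reason to equal $\beta$. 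So the step ``once the lower bound $\|\Delta_m\|=\Omega(m^{-\beta})$ is available'' is an assertion of exactly the missing lemma, not a derivation of it. To be fair, the paper's own proof is equally silent on this point; but as written, your Regime B argument establishes only $s(ASR)\le-2\beta$ there, not the claimed equality.
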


\begin{proof} See supplementary material. \end{proof}

\begin{remark} 
Theorem \ref{thm:fabASR} shows that $s(ASR)=-\min(2\beta,2\gamma)$, i.e., when then Fabian algorithm is optimized for $SR$, $s(ASR)$ is close to $-2\gamma$, close to $0$.
\end{remark}

\subsection{Shamir and Fabian adapted for $ASR$}
Both algorithms presented in this section have a clear difference between the search and recommendation points. This fact is not automatically distinguished when we are evaluating their performance using for example a test bed. If we modify the algorithms we can achieve $ASR$ approximating well the optimal behavior reported by $SR$. A modification such as sampling  one point out of two at the current recommendation, without using it in the algorithm can imply $s(ASR)=s(SR)$ arbitrarily close to $-1$.
\begin{sidewaystable}
\centering
\ra{1.3}
\begin{tabular}{@{}rrrcrrcrr@{}}\toprule
& \multicolumn{2}{c}{SR} & \phantom{abc}& \multicolumn{2}{c}{ASR} &
\phantom{abc} & \multicolumn{2}{c}{RSR}\\ \cmidrule{2-3} \cmidrule{5-6} \cmidrule{8-9}
& conv. & type && conv. & type && conv. & type  \\ \midrule
\bf{Evolutionary Algorithms}\\
RS & $\mathbf{0}$  & \textbf{expect.} && $\mathbf{ -\frac2d }$ & \textbf{a.s.} && $\mathbf{ -\frac2d }$ & a.s. \\
ES &  $0$ & expect. && $-\frac2d$ & a.s. && $-\frac2d$ & a.s. \\
ES + $r$ & $- \frac12$ & high prob. &&  $- \frac12$ & high prob.  && $- \frac12$ & high prob. \\
MES+ $r$ & $- \frac12$  & high prob. &&  $\mathbf{ - \frac12 - \frac2d }$ & \textbf{high prob.}  && $\mathbf{ - \frac12 - \frac2d } $ & \textbf{high prob.} \\ 
\bf{Stochastic Gradient}\\
Shamir & $\mathbf{-1}$& \textbf{expect.} && $\mathbf{0}$ & \textbf{expect.} && $\mathbf{-1}$ & \textbf{expect.} \\
Shamir for ASR  & $\mathbf{ -1 }$& \textbf{expect. }&& $\mathbf{-1}$ & \textbf{expect.} && $\mathbf{-1}$  & \textbf{expect.}\\
Fabian & $\mathbf{-(1-e)}$  & \textbf{a.s.}  && $\mathbf{-e'}$  & \textbf{a.s.} && $\mathbf{-(1-e)}$& \textbf{a.s.}\\
Fabian for ASR & $\mathbf{-(1-e)}$ & \textbf{a.s.} && $\mathbf{-(1-e)}$  & \textbf{a.s.} &&  $\mathbf{-(1-e)}$ & \textbf{a.s.} \\
\bottomrule
\end{tabular}
\caption{\label{maintable}Convergence rates for the regrets analysed on this paper. The ``convergence'' column refers to the convergence rate and the ``type'' column specifies the type of convergence: with high probability, in expectation, almost surely. The results in bold are proved and the others are conjectures, all of them presented in this paper. 
}
\end{sidewaystable}
\vfill\break
\section{Experiments}\label{sec:experiments}
We present experimental results for part of the algorithms theoretically analysed \footnote{In addition, the experimental results we include the algorithm $UHCMAES$, as another example of an $ES$. For more information, see ~\cite{hansen2009tec}.}.  We will analyse the convergence rate of this algorithms in terms of \emph{slope} (see section \ref{sec:criteria} for the definition). As in the theoretical part, 
the function to optimize is the noisy sphere: $f(x)=\|x-x^*\|^2+\vartheta\NN$ where $\vartheta=0.3$ and $\NN$ is a standard gaussian distribution\footnote{ The choice $\vartheta=0.3$ is made only to illustrate in the experiments the effect of the regret choice in a reasonable time bugdet. The noise is weaker than in the case of a standard gaussian and the algorithms can deal with it faster. The optimum $x^*$ for the experiments of each algorithm is different, which does not affect the result since the regret compares the function value on the search/recommended points and on the optimum.}. The dimension of the problem is $2$. The results in Fig. \ref{zola} correspond to the mean of 10 runs for each of the algorithms.
\begin{table}[h]
\scriptsize
	\begin{center}
		\begin{tabular}{rl}
		\toprule
	 		Algorithms & Set of parameters \\
	 	\midrule
		UHCMAES ~\cite{hansen2009tec} &  $x_{\text{initial}}=1$,  $\sigma_{\text{in}}=1$ \\
		Shamir &$\epsilon=0.3$,  $\lambda=0.1$, $B=3$\\
		$(1+1)$-ES & \\
		$(1+1)$-ES resamp & resamp$=2^n$  \\
		Fabian & s=4 $\alpha=1$,  $\gamma=0.01$	 \\
		\bottomrule
 		\end{tabular}
	\end{center}
\label{table:exps} 
\end{table}
\begin{figure*}[ht]
\begin{minipage}{\textwidth}
	\centering
	\subfloat[Simple Regret]{\label{fig:SR}\includegraphics[width=0.45\textwidth]{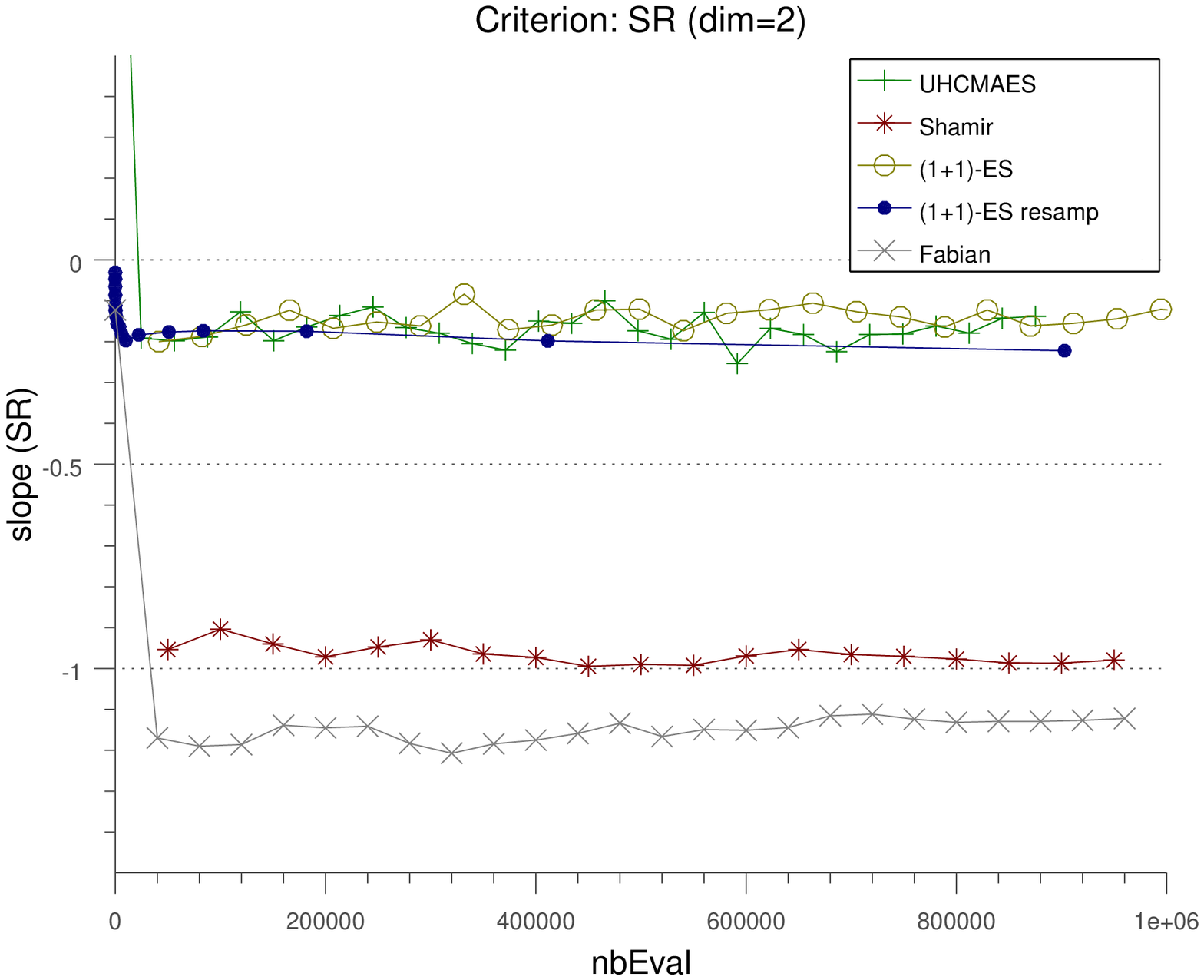}}
	\subfloat[Approximate Simple Regret]{\label{fig:ASR}\includegraphics[width=0.45\textwidth]{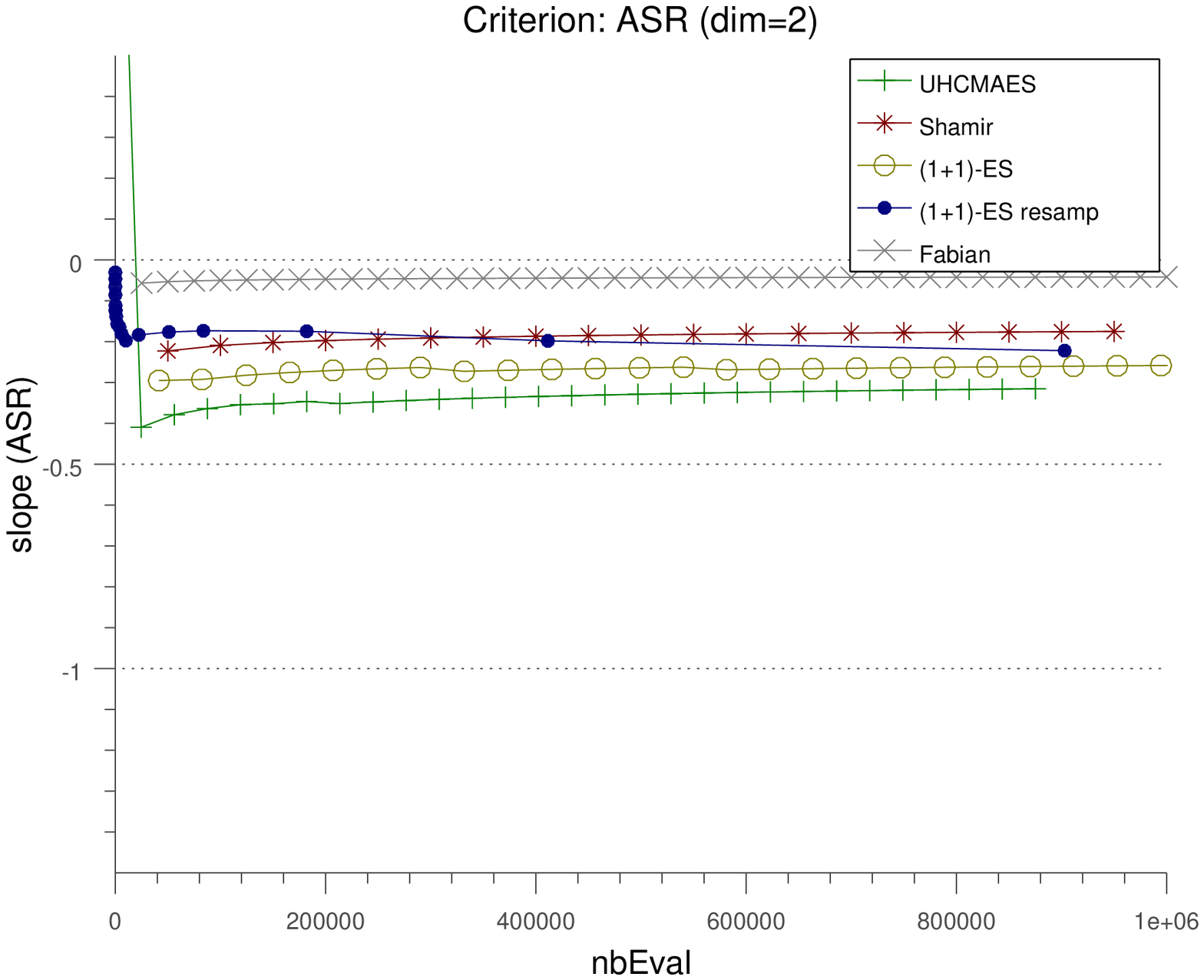}}
	\caption{\label{zola} Figure \ref{fig:SR} presents the Slope of Simple Regret for each algorithm on the first $(1\cdot10^6)$ function evaluations. Stochastic Gradient algorithms reach $s(SR)=-1$ while the evolutionary algorithms present $s(SR)=-0.2$. Figure \ref{fig:ASR} presents the Slope of Approximate Simple Regret. Observe that the performance of the algorithms is inverted with regards to the figure \ref{fig:SR} : now the Stochastic Gradient algorithms have the worse performance.}
\end{minipage}
\end{figure*}

The results in figure (\ref{fig:SR}) show the comparison between algorithms with regards to the $SR$ criterion. The budget is limited to $(1\cdot10^6)$ function evaluations. We can see clearly the difference between the algorithm that are gradient-based and the $ES$. The algorithms Fabian and Shamir achieve $s(SR)=-1$ whereas the $ES$ presented cannot do better than $s(SR)=-0.25$. The figure
 (\ref{fig:ASR}) shows that the use of $ASR$ changes completely the performance of the algorithms. In this case, the gradient-based algorithms are the ones with the worst performance. The results confirm the theoretical work (and the conjectures) presented in sections \ref{sec:EA} and \ref{sec:stoc}.


\section{Conclusion}\label{sec:conc}
In this paper we analyse the use of regrets to measure performance of algorithms in noisy optimization problems. We take into account the \emph{Simple Regret} ($SR$) and other two forms of regret used in practice to approximate the $SR$. We show that the convergence rates of the same algorithm over the same class of functions depend on the considered regret. Ultimately this leads to inconsistent results: algorithms are efficient for $SR$ and the opposite for an approximation of $SR$.
Table \ref{maintable} summarizes our results, detailing if they are proved or conjectured and what type of convergence is found.

{\bf{Approximations of $SR$.}} The $ASR$ appears to be a poor approximation of $SR$. Two situations are exposed in this paper.
First, algorithms with fast convergence for $SR$ can have a slow convergence for $ASR$ (Fabian and Shamir algorithm). However, this can be solved by modifying the algorithm in order to sample, sometimes, a recommended point. 
Second, an algorithm with slow convergence for $SR$ can have a fast convergence for $ASR$, and this is a serious issue. There is no simple ``patch'' to deal with this problem. Test beds using $ASR$ will underestimate algorithms which include random exploration. This is partially, but not totally, solved by $RSR$. We did not come up with a satisfactory regret definition, which would be consistent with $SR$ (at least showing similar convergence rates) and without drawbacks as those presented above.
However, the difference (between $SR$ on the one hand, and $ASR$/$RSR$ on the other hand) decreases with the dimension for most algorithms. 

{\bf{Simple Regret.}} 
$SR$ is clearly a natural way to measure the quality of an approximated optimum output by an algorithm. The drawback of $SR$ is that it is not necessarily non-increasing, which is an issue for the concept of ``stable first hitting time''. 
Note that a particularly interesting result is the one of Evolution Strategies correctly tuned in terms of its resampling scheme. It appears, from theoretical and experimental results, that it reaches half the speed of classical noisy optimization algorithms in terms of $SR$, on the log-log scale. This corresponds to a squared computation time. \footnote{This is not the case, however, for some ES with fast rates in noisy cases, such as mutate-large-inherit-small\cite{BeyerMutate}.}\\
{\bf{Further work.}} 
We have compared convergence rates through their \emph{slopes}, but in some cases we have slopes for almost sure convergence, in other cases slope with high probability, and in others slope in expectation. There is room for refinement of the results. As for the approximations of $SR$, there might be other definitions of regret that approximate it in a better way. Evidently, the use of an approximation makes sense in the case where there are technical issues that do not allow the direct computation of $SR$. 

\bibliographystyle{abbrv}
\bibliography{extracted}




\section*{Supplementary material}

\begin{customthm}{1}
With the framework above, for all $\beta>0$, the expected simple regret $\E(SR_n)$ is not $O(n^{-\beta})$.
\end{customthm}

\begin{lemma}[Logarithmic bounds on the quantile of the standard Gaussian variable]\label{lemma:quantile}
Let $Q(q)$ be $q$ quantile of the standard centered Gaussian, i.e. $\forall q\in (0,1), {P(\NN \leq Q(q) ) =q}$. Then $\forall \kappa\geq 1$, $\forall q\in (0,1)$, $$1-\sqrt{-\frac{2}{\kappa}\log(c(\kappa)q)}\leq Q(q)\leq 1-{\sqrt{-2\log(2q)}},$$
where $c(\kappa)$ is a constant depending only on $\kappa$. In the following, we will denote by $X(q)$ (resp. $Y(q)$) the lower (resp.upper) bound on $Q$: $X(q)=1-\sqrt{-\frac{2}{\kappa}\log(c(\kappa)q)}$ and $Y(q)=1-{\sqrt{-2\log(2q)}}$.    
\end{lemma}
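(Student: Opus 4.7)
The plan is to invoke classical Mills-ratio bounds on the standard-Gaussian tail $\overline{\Phi}(t) := P(\NN > t)$:
$$\frac{t}{\sqrt{2\pi}(1+t^2)}\,e^{-t^2/2} \;\leq\; \overline{\Phi}(t) \;\leq\; \tfrac{1}{2}\,e^{-t^2/2}, \qquad t\geq 0,$$
where the upper estimate is a standard monotonicity/derivative argument and the lower is Komatsu's inequality. Both are textbook (see the Cook note cited in the main body) and I would simply quote them without re-derivation.

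For the bulk of the argument I restrict to $q \in (0, 1/2]$, for which $Q(q) \leq 0$; setting $t := -Q(q) \geq 0$, symmetry of $\NN$ yields $q = \overline{\Phi}(t)$. The upper bound on $Q(q)$ — equivalently, a lower bound on $t$ — follows immediately from the upper tail estimate: $q \leq \tfrac12 e^{-t^2/2}$ gives $t \leq \sqrt{-2\log(2q)}$, matching the right-hand side of the lemma. The lower bound on $Q(q)$ takes slightly more work. The lower tail estimate gives, for $t \geq 1$, $q \geq \tfrac{1}{2\sqrt{2\pi}\,t}\,e^{-t^2/2}$, which after taking logarithms rearranges to $t^2 \geq -2\log\!\bigl(q\,s(t)\bigr)$ with $s(t) = 2\sqrt{2\pi}\,t$. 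Substituting the upper-half bound $t \leq \sqrt{-2\log(2q)}$ into $s$ converts this into $t \geq \sqrt{-2\log(q\,S(q))}$ for an explicit $S(q)$ that is polylogarithmic in $1/q$.

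The last step is the absorption of this polylogarithmic factor into the stated form $\sqrt{-\tfrac{2}{\kappa}\log(c(\kappa)q)}$. The inequality $\sqrt{-2\log(qS(q))} \geq \sqrt{-\tfrac{2}{\kappa}\log(c(\kappa)q)}$ is equivalent, after squaring and exponentiating, to $q^{\kappa-1}\,S(q)^\kappa \leq c(\kappa)$; for every $\kappa > 1$ the left-hand side vanishes as $q \to 0^+$ and is continuous and bounded on $(0,1]$, so a suitable constant $c(\kappa)$ exists. The boundary regimes — $t < 1$ where the Komatsu estimate needs a crude backup, and $q \in (1/2, 1)$ if one insists the statement hold there despite the right-hand side becoming non-real — are all absorbed by shrinking $c(\kappa)$ so that the lower bound becomes vacuous. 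The main obstacle is precisely this absorption bookkeeping, together with the mild technicality at $\kappa = 1$, where no polylogarithmic absorption is available and the bound essentially degenerates into a direct restatement of the Mills estimate with a worse constant.
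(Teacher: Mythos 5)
The paper offers no proof of this lemma at all---it simply points to an external arXiv reference---so there is no ``paper approach'' to compare against; your Mills-ratio strategy is the natural one to try. Unfortunately, as written your argument attaches each tail estimate to the \emph{wrong} side of the sandwich, and so proves neither inequality. Write $t=-Q(q)$ for $q\in(0,1/2]$, so that $q=\bar\Phi(t)$ with $\bar\Phi(t)=P(\NN>t)$. An upper bound on $Q$ is a \emph{lower} bound on $t$, and since $\bar\Phi$ is decreasing, a lower bound on $t$ can only come from a \emph{lower} bound on the tail: $q=\bar\Phi(t)\ge g(t)$ forces $t\ge g^{-1}(q)$. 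Your first step instead takes the upper estimate $q\le\frac12 e^{-t^2/2}$ and correctly derives $t\le\sqrt{-2\log(2q)}$---but that is an upper bound on $t$, i.e.\ the \emph{lower} bound $Q(q)\ge-\sqrt{-2\log(2q)}$; it does not ``match the right-hand side of the lemma''. The genuine upper bound $Q(q)\le 1-\sqrt{-2\log(2q)}$ must come from the Komatsu-type lower estimate: for $t\ge 1$ it gives $t^2+2\log(\sqrt{2\pi}\,t)\ge-2\log(2q)$, and since $2t+1\ge 2\log(\sqrt{2\pi}\,t)$ one gets $(t+1)^2\ge-2\log(2q)$, i.e.\ $t\ge\sqrt{-2\log(2q)}-1$; the leading ``$1-$'' in the statement exists precisely to absorb that $\log t$ correction additively. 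Symmetrically, your treatment of the lemma's left inequality uses the lower tail estimate to conclude $t\ge\sqrt{-2\log(qS(q))}$, which is again a lower bound on $t$ and hence yields $Q(q)\le\cdots$, the reverse of what is claimed.

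The error also propagates into your absorption step. Once the roles are corrected, the left inequality of the lemma requires $\sqrt{-2\log(2q)}\le\sqrt{-\frac{2}{\kappa}\log(c(\kappa)q)}-1$ for all small $q$, which (comparing leading terms $\sqrt{2\log(1/q)}$ versus $\sqrt{(2/\kappa)\log(1/q)}$) forces $2/\kappa>2$, i.e.\ $\kappa<1$; no choice of the constant $c(\kappa)$ can compensate when $\kappa\ge 1$, since the deficit grows like $\sqrt{\log(1/q)}$. Your condition $q^{\kappa-1}S(q)^{\kappa}\le c(\kappa)$, which indeed holds for $\kappa>1$, is the answer to the wrong comparison. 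This is worth flagging: the lemma's stated range $\kappa\ge 1$ appears incompatible with the asymptotics $Q(q)\sim-\sqrt{2\log(1/q)}$, and a correct proof along your lines would in fact establish the bound only for $\kappa<1$ (which is all that the downstream application in the paper's Theorem~1 actually uses). As it stands, though, the direction swap is a fatal gap in both halves of your argument.
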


\begin{proof}
See \url{http://arxiv.org/pdf/1202.6483v2.pdf}.
\end{proof}

\begin{definition}\label{def:eps}
We recall that we consider $n$ i.i.d search points  $x_1,\dots,x_n$. Let $o$ and $\Omega$ be the standard Landau notations. Let $\e:\mathbb{N}\setminus\{0\}\mapsto\mathbb{R}$ and $C:\mathbb{N}\setminus\{0\}\mapsto\mathbb{R}$ be two functions satisfying:
\begin{itemize}
\item $\forall n\in\mathbb{N}\setminus\{0\}$, $\epsilon(n)>0$ and $C(n)>0$,
\item $\forall n\in\mathbb{N}\setminus\{0\}$, $\epsilon(n) =o( C(n) )$,
\item $\forall n\in\mathbb{N}\setminus\{0\}$, $\epsilon(n)=\Omega(1/n)$ and $C(n)=\Omega(1/n)$,
\item $\forall n\in\mathbb{N}\setminus\{0\}$, $\epsilon(n)=o(1)$ and $C(n)=o(1)$.
\end{itemize}
\end{definition}

\begin{definition}
With the previous definition of $\e$ and $C$, consider the set $G$ defined by
$$G:=\{i\in \{1,\dots,n\}; \|x_i-x^*\|\leq \sqrt{\epsilon(n)}\}.$$
$G$ is the set of ``good'' search points, with simple regret better than $\epsilon(n)$. We denote by $N_G(n)$ the cardinality of $G$.
\end{definition}

\begin{definition}
Similarly, consider the set $B$ defined by
$$B:=\{i\in\{1,\dots,n\}; \sqrt{\e(n)} < \|x_i-x^*\|\leq\sqrt{C(n)}\}.$$
$B$ is the set of ``bad'' search points, with simple regret bigger than $\epsilon(n)$, but still not that bad, since the simple regret does not exceed $C(n)$. We denote by $N_B(n)$ the cardinality of $B$.
\end{definition}


      
\begin{lemma}[Linear numbers of good and bad points]\label{lemmaCheb}
There exist a constant $K_d>0$ such that, with probability at least $1/2$,
\begin{equation}
N_G(n) < 2K_{d}n\sqrt{\epsilon(n)}^d\label{calvin}
\end{equation}  
and 
\begin{equation}
N_B(n) \geq 2 K_d n \sqrt{C(n)}^d.\label{hobbes}
\end{equation}
\end{lemma}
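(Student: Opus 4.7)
My plan is to compute the means of $N_G(n)$ and $N_B(n)$ and then apply standard concentration inequalities. Since $x_i$ and $x^*$ are independent and uniform on $[0,1]^d$, the probability $p(r) := \PP(\|x_i - x^*\| \leq r)$ can be written as $\int_{[0,1]^d} \mathrm{vol}(B(x^*, r) \cap [0,1]^d)\, dx^*$. The trivial upper bound $p(r) \leq V_d r^d$, where $V_d$ is the volume of the unit $d$-ball, is immediate; a matching lower bound of the form $p(r) \geq (1 - 2r)^d V_d r^d$ is obtained by restricting the integration to $x^* \in [r, 1-r]^d$, where $B(x^*, r)$ lies entirely inside the cube and so contributes its full volume. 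Hence $p(r) = \Theta(r^d)$ for $r$ small, with constants depending only on $d$. Plugging in $r = \sqrt{\epsilon(n)}$ and $r = \sqrt{C(n)}$, and using $\epsilon(n) = o(C(n))$ (so that the good region is a lower-order correction inside the bad region), one obtains $\E[N_G(n)] = \Theta(n \, \epsilon(n)^{d/2})$ and $\E[N_B(n)] = \Theta(n \, C(n)^{d/2})$.

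Next I would pass from means to high-probability bounds. Conditional on $x^*$, each of $N_G(n)$ and $N_B(n)$ is a sum of $n$ independent Bernoulli variables, so its variance is bounded by its mean. Applying Markov's inequality to $N_G(n)$ gives $N_G(n) < 4\,\E[N_G(n) \mid x^*]$ with conditional probability at least $3/4$. Applying Chebyshev's inequality to $N_B(n)$ gives $N_B(n) \geq \E[N_B(n) \mid x^*]/2$ with conditional probability at least $3/4$, once $n\,C(n)^{d/2} \to \infty$ so that the $\sqrt{\mathrm{Var}}$ correction is negligible compared to the mean. A union bound shows that both events hold simultaneously with conditional probability at least $1/2$, and removing the conditioning on $x^*$ preserves the bound. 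Absorbing the resulting multiplicative constants into a single $K_d > 0$ depending only on $d$ yields the two inequalities of the lemma.

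The main subtlety is reconciling the asymmetry between the upper bound on $N_G$ and the lower bound on $N_B$ with a \emph{single} constant $K_d$: the volume estimate $p(r) \leq V_d r^d$ and its lower counterpart $p(r) \geq c_d r^d$ differ by a boundary-dependent factor, and the Markov and Chebyshev arguments use different multipliers. Both discrepancies are absolute constants depending only on $d$, and since only the scaling in $n$ matters in the application (Theorem~\ref{rswn}), one can always absorb them by taking $K_d$ sufficiently loose on each side without affecting any downstream conclusion. A secondary point is that the step based on Chebyshev requires $n\,C(n)^{d/2} \to \infty$, a mild strengthening of $C(n) = \Omega(1/n)$ that is naturally satisfied by the specific choice $C(n) = n^{-\alpha}$ with $\alpha$ small used later in the proof of Theorem~\ref{rswn}.
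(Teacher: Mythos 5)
Your proposal is correct and follows essentially the same route as the paper's own proof: identify the Bernoulli parameter as the (normalized) volume of the ball of radius $\sqrt{\epsilon(n)}$ or $\sqrt{C(n)}$, then apply Markov's inequality for the upper bound on $N_G(n)$ and Chebyshev's inequality for the lower bound on $N_B(n)$. If anything you are more careful than the paper on three points it glosses over --- the boundary effect of the ball protruding outside $[0,1]^d$ (hence the need for matching upper and lower volume bounds), the union bound needed to make both events hold \emph{simultaneously} with probability $1/2$ rather than each separately, and the condition $n\,C(n)^{d/2}\to\infty$ required for the Chebyshev step --- though your remark that a \emph{single} constant $K_d$ can be recovered by taking it ``sufficiently loose on each side'' is not literally coherent (looseness pulls in opposite directions for the two inequalities), and the honest fix is simply to restate the lemma with two constants, which changes nothing downstream since only the scaling in $n$ is used in Theorem~\ref{rswn}.
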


\begin{proof} 
Proof of Eq.~\ref{calvin}. Consider a search point $x_i$. The search points are drawn uniformly at random following the uniform distribution in $[0,1]^{d}$, then the probability $p$ that $x_i\in G$ is $p=\frac{Vol(B_d(x^*,\sqrt{\e(n)}))}{Vol([0,1]^{d})}= K_{d}\sqrt{\e(n)}^{d}$, where $Vol$ stands for `volume' and $K_{d}$ is a constant depending on $d$ only. Therefore the number $N_G(n)$ of good points is the sum of $n$ Bernoulli random variables with parameter $K_{d}\sqrt{\epsilon(n)}^d$. The expectation is then $K_{d}n\sqrt{\epsilon(n)}^d$.  
By Markov inequality, 
\begin{equation*}
\mathbb{P}(N_{G}(n)\geq 2K_{d}n\sqrt{\epsilon(n)}^d)\leq \frac{K_{d}n\sqrt{\epsilon(n)}^d}{2K_{d}n\sqrt{\epsilon(n)}^d}=\frac{1}{2}. 
\end{equation*}

Similarly, $N_B(n)$ is a binomial random variable of parameters $n$ and $p=K_d (\sqrt{C(n)}^d-\sqrt{\e(n)}^d)$. Then by Chebyshev's inequality,
$\mathbb{P}(|N_B(n)-K_d n (\sqrt{C(n)}^d-\sqrt{\e(n)}^d)|\leq \alpha)\geq 1/2$ by taking $\alpha=\sqrt{2K_dn(\sqrt{C(n)}^d-\sqrt{\e(n)}^d)(1-\sqrt{C(n)}^d+\sqrt{\e(n)}^d)}$. Hence with probability at least $1/2$, $N_B(n)\geq K_d n (\sqrt{C(n)}^d-\sqrt{\e(n)}^d)+\alpha\geq 2 K_d n \sqrt{C(n)}^d$ since $\e(n)=o(C(n))$.
\end{proof}

We recall that $\forall i\in\{1,\dots,n\}$, $y_i$ is the fitness value of search point $x_i$: $y_i=\|x_i - x^*\|^2 + \NN_i$, where $\NN_i$ is the realisation of a standard centered gaussian variable.

The following property gives a lower bound on the fitness values of the `good' points, and an upper bound on the fitness values of the `bad' points.

\begin{proposition}\label{zeprop}
With $X$ and $Y$ as defined in Lemma~\ref{lemma:quantile}, and $C$ as defined in Definition~\ref{def:eps}, there exists some $c\in(0,1)$ such that, with probability at least $c$,
\begin{equation}\label{eq:goodguyslowerbound}
\inf_{i \in G} y_i \geq X(1/N_G(n)),
\end{equation}
and
\begin{equation}\label{eq:badguysupperbound}
\inf_{i\in B} y_i \leq C(n)+Y(1/N_B(n)).
\end{equation}
\end{proposition}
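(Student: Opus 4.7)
The plan is to decouple the randomness of the search points from that of the noise. First I would condition on the event $E$ from Lemma~\ref{lemmaCheb}, which is measurable with respect to the $(x_i)$ alone and occurs with probability at least some $c_1>0$; on $E$ the cardinalities $N_G(n)$ and $N_B(n)$ satisfy the two deterministic linear bounds stated in that lemma. Since by assumption the noises $(\NN_i)$ are independent of the search points, conditionally on $E$ and on the realisation of $(x_i)$ they remain i.i.d.\ standard Gaussians, while $G$ and $B$ become deterministic subsets of $\{1,\dots,n\}$. The two elementary deterministic observations I would use are $y_i\geq \NN_i$ for every $i$ (because $\|x_i-x^*\|^2\geq 0$) and $y_i\leq C(n)+\NN_i$ for $i\in B$ (by definition of $B$).

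To establish Eq.~\ref{eq:goodguyslowerbound}, the inequality $X(1/N_G(n))\leq Q(1/N_G(n))$ from Lemma~\ref{lemma:quantile} gives $\PP(\NN_i\geq X(1/N_G(n)))\geq 1-1/N_G(n)$ for each $i\in G$, and by conditional independence
\begin{equation*}
\PP\!\left(\inf_{i\in G}\NN_i\geq X(1/N_G(n))\,\Big|\,E,(x_j)\right)\;\geq\;\left(1-\frac{1}{N_G(n)}\right)^{N_G(n)}\;\geq\;c_2,
\end{equation*}
for some $c_2>0$ uniform in $N_G(n)\geq 2$ (the sequence $(1-1/N)^N$ is increasing and converges to $1/e$). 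Since $y_i\geq \NN_i$, this yields Eq.~\ref{eq:goodguyslowerbound}. Symmetrically, for Eq.~\ref{eq:badguysupperbound} the bound $Y(1/N_B(n))\geq Q(1/N_B(n))$ gives $\PP(\NN_i\leq Y(1/N_B(n)))\geq 1/N_B(n)$, hence
\begin{equation*}
\PP\!\left(\inf_{i\in B}\NN_i\leq Y(1/N_B(n))\,\Big|\,E,(x_j)\right)\;\geq\;1-\left(1-\frac{1}{N_B(n)}\right)^{N_B(n)}\;\geq\;c_3,
\end{equation*}
with some $c_3>0$ (the bound is decreasing in $N_B(n)$ and converges to $1-1/e$); combined with $y_i\leq C(n)+\NN_i$ for $i\in B$ this produces Eq.~\ref{eq:badguysupperbound}.

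Finally, since $G\cap B=\emptyset$ the two extrema depend on disjoint subfamilies of $(\NN_i)$ and are therefore conditionally independent given the search points; the joint event thus has conditional probability at least $c_2 c_3$ on $E$, and integrating against $\PP(E)\geq c_1$ gives the unconditional bound $c:=c_1 c_2 c_3\in(0,1)$ claimed in the proposition.

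The main obstacle is making the conditioning argument fully rigorous: $N_G(n)$, $N_B(n)$, $G$ and $B$ are all random and determined by $(x_i)$, so one needs to work on a probability space where one conditions first on the search points (restricted to $E$), making these quantities deterministic while keeping $(\NN_i)$ a genuine i.i.d.\ Gaussian family. A minor bookkeeping point is to ensure that on $E$ the cardinality $N_G(n)$ is at least $2$ for all $n$ large enough so that the $(1-1/N)^N$ estimate is bounded away from $0$; this is consistent with the growth hypotheses of Definition~\ref{def:eps} and with how the proposition is invoked in the proof of Theorem~\ref{rswn}.
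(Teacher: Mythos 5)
Your proof is correct and follows essentially the same route as the paper's: bound $y_i$ below by $\NN_i$ (resp.\ above by $C(n)+\NN_i$) using membership in $G$ (resp.\ $B$), sandwich the Gaussian quantile between $X$ and $Y$ via Lemma~\ref{lemma:quantile}, and use the $1-(1-1/N)^N$ computation to get constant-probability bounds on the two infima. If anything, your write-up is more careful than the paper's, which leaves implicit both the conditioning on the search points (so that $G$, $B$, $N_G(n)$, $N_B(n)$ become deterministic while the $\NN_i$ stay i.i.d.) and the independence of the two infima over the disjoint index sets, which is genuinely needed to combine the two events since a union bound with $1/4$ and $1-e^{-1}$ would not suffice.
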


\begin{proof}

Consider some Gaussian random variables independently identically distributed $\NN_1,\dots,\NN_N$. $\forall\ i\in \{1,\dots,N\}$, using notation of Lemma~\ref{lemma:quantile}, $\mathbb{P}(\NN_i\leq Q(1/N))=1/N$, then $\mathbb{P}(\inf_{1\leq i\leq N} \NN_i\leq Q(1/N))=1-\mathbb{P}(\inf_{1\leq i\leq N} \NN_i\geq Q(1/N)=1-(1-1/N)^N$. The study of the function $x\mapsto 1-(1-1/x)^x$ then shows that $\mathbb{P}(\inf_{1\leq i\leq N} \NN_i\leq Q(1/N))\in[1-\exp(-1),3/4]$ (as soon as $N\geq 2$). 

Proof of Eq.~\ref{eq:goodguyslowerbound}.
\begin{align*}
\mathbb{P}(\inf_{i \in G} y_i \geq X(1/N_G(n)))&=\mathbb{P}(\inf_{i \in G} \|x_i-x^*\|^2\\
&\ \ \ \ \ \ \ \ \ +\NN_i \geq X(1/N_G(n)))\\
&\geq \mathbb{P}(\inf_{i \in G} \|x_i-x^*\|^2\\
&\ \ \ \ \ \ \ \ \ \ +\NN_i \geq \e(n) + X(1/N_G(n)))\\
&\geq \mathbb{P}(\inf_{i \in G}\NN_i \geq X(1/N_G(n)))\\
&\geq 1-\mathbb{P}(\inf_{i \in G}\NN_i \leq X(1/N_G(n)))\\
&\geq 1-\mathbb{P}(\inf_{i \in G}\NN_i \leq Q(1/N_G(n)))\\
&\geq 1/4.
\end{align*}

Proof of Eq.~\ref{eq:badguysupperbound}.

\begin{align*}
\mathbb{P}(\inf_{i \in B} y_i \leq C(n)+Y(1/N_B(n)))&=\mathbb{P}(\inf_{i \in G} \|x_i-x^*\|^2+\NN_i\\
&\ \ \ \ \ \leq C(n)+Y(1/N_B(n)))\\
&=\mathbb{P}(\inf_{i\in B} \NN_i \leq Y(1/N_B(n)))\\
&\geq \mathbb{P}(\inf_{i\in B} \NN_i \leq Q(1/N_B(n)))\\
&\geq  1-\exp(-1) 
\end{align*}   
Hence, with probability at least $1/4$, Eqs.~\ref{eq:goodguyslowerbound}
and \ref{eq:badguysupperbound} hold.
\end{proof}


{\bf{Proof of Theorem \ref{rswn}:}}
\begin{proof}

Let us assume that the expected simple regret has a slope $-\beta_0<-\beta$, for some $0<\beta<1$: $\E(SR_n)=O(n^{-\beta_0})$. 

We define $\epsilon(n)=n^{-\beta}$. For some $0<k<1$, $\alpha=\beta/k$ and $C(n)=n^{-\alpha}$. $\e$ and $C$ satisfy Definition~\ref{def:eps}. 

Lemma \ref{lemmaCheb} and Proposition \ref{zeprop} implies that there is a $0<c<1$ such that with probability $c$, for $n$ sufficiently large,
\begin{itemize}
	\item there are much more good points than bad points, i.e.
\begin{equation}
	N_G(n)=o(N_B(n))\label{proutproutprout}
\end{equation}
 	thanks to Eq. \ref{calvin} and \ref{hobbes}.
	\item all good points have noisy fitness at least $X(1/N_G(n))=1-\sqrt{-\frac{2}{\kappa}\log(c(\kappa)/N_G(n))}$;
	\item at least one bad point has fitness at most $C(n)+Y(1/N_B(n))=n^{-\alpha}+1-{\sqrt{-2\log(2/N_B(n))}})$; 
	\item therefore (by the two points above, using Eq. \ref{proutproutprout} and the fact that $n$ is big so that $n^{-\alpha}$ is negligeable), at least one bad point has a better noisy fitness than all the good points, and therefore is selected in Lines 7-9 of Alg. 1.
\end{itemize}

This implies that 
\begin{align*}
\E(SR_n)&=\E(\|x_n-x^*\|^2)=\E(\|x_n-x^*\|^2| x_n \in B)\mathbb{P}(x_n \in B)\\
& +\E(\|x_n-x^*\|^2| x_n \in G)\mathbb{P}(x_n \in G)\\
&\geq \E(\|x_n-x^*\|^2| x_n \in B)\mathbb{P}(x_n \in B)\\
&\geq \e(n)\times c\\
&\geq c n^{-\beta}.
\end{align*}
We have a contradicion, hence $\beta_0 >0$.
\end{proof}

\begin{customthm}{2}
Consider $0<\delta<1$.
Consider an objective function with expectation $F(x)=\|x\|^2$, where $x\in \R^d$.
Consider a $MES+R$ as described previously (resampling number exponential in the number of iterations). Assume that:
\begin{enumerate}[label=(\roman*)]
\item $\sigma_n$ and $\|\tilde{x}_n\|$ have the same order of magnitude: 
\begin{equation}
\|\tilde{x}_n\|=\Theta(\sigma_n).
\end{equation}
\item $\log-\log$ convergence occurs for the $SR$: 
\begin{equation}
\frac{\log(\|\tilde{x}_n\|)}{\log(n)}\underset{n\rightarrow+\infty}{\longrightarrow }-\frac12 ~\mbox{ with probability $1-\delta$,}
\end{equation}
\end{enumerate}

Then, with probability at least  $1-\delta$, $s(ASR)=-1/2-2/d$.
\end{customthm}

\begin{proof}
We have $r_n = R \zeta^n$.
In this proof we will index the recommendation and search points  by the number of \emph{iterations} instead of the number of evaluations. For $ES+r$, the recommendation point of the iteration $n$ is  the corresponding center of the offspring distribution $\tilde x_n$. The step-size $\sigma_n$ corresponds to the standard deviation of the offspring distribution. The search points of iteration $n$ are the $\lambda$ offsprings produced on the iteration, denoted $\{ x_n^{(i)}: i=1,\ldots,\lambda \}$.

We define $e_n$ the number of evaluations until the iteration $n$. From Algorithm 2 
we have $e_n=\lambda \sum_{i=1}^{n} r_i$ for an ES with resampling $r$. 
By hypothesis (Eq.~\ref{loglog}) we know the convergence  of the sequence $S_n$ defined as the logarithm of the recommendation points, indexed by the number evaluations, divided by the logarithm the number of evaluations. Therefore, the sequence $\mathcal{S}_n = \log (||x_n||) /\log (e_n)$ is a subsequence of $S_n$, hence convergent to the same limit, with the same probability. The relation on Eq.~\ref{xsigma} also remains the same after the index modification. From these facts we can inmediatly conclude that $\exists \quad n_0$ such that
\begin{equation}\label{ensigma}
\sigma_{n}=\Theta(e_n^{-1/4}) \mbox{ for $n\geq n_0.$}
\end{equation}
For the $MES+r$ algorithm, the $ASR_n$ is defined as:
\begin{equation}
{ASR_n =\min_{m\leq n} \min_{1\leq i \leq \lambda+r_m}\|{x'}_{m}^{(i)}\|^{2}}
\end{equation}
where $\{ {x'}^{(i)}_n : i= 1,\ldots,\lambda+r_m \}$ considers all the search points at iteration $n$. Both the ``real'' and the ``fake'' ones. We will find a bound for $ASR_n$, which will lead us to the convergence rate of the $ASR$ for the $MES+r$ algorithm.
 
Let $p_x$ be the probability density at point $x$ of the offsprings in iteration $n$. Therefore it is the probability density of a Gaussian centered at $\tilde{x}_n$ and with variance $\sigma_n^2$. At the origin:
\begin{align}
p_0&=\frac{1}{(2\pi)^{d/2}\sigma_{n}^{d}}\exp\left\{-\frac12 \frac{(-\tilde{x}_n)^{T}(-\tilde{x}_n)}{\sigma_{n}^2}\right\}\nonumber\\
     &=\Theta(\sigma_{n}^{-d})~\mbox{by Eq. \ref{xsigma}}\nonumber\\
     &=\Theta(e_n^{d/4})~\mbox{by Eq. \ref{ensigma}}\label{densitybound}
\end{align}

Now, at iteration $n$, the probability to have at least one offspring with norm less than $\epsilon>0$ is  
\begin{eqnarray*}
\PP(\exists i : \|{x'}_{n}^{(i)}\|\leq \epsilon)&\leq (\lambda + r_n )\PP(\|{x'}_{n}^{(i)}\| \leq \epsilon)\\
				      			    &\leq (\lambda +r_n) \int_{\|x\| \leq \epsilon}dp_x
\end{eqnarray*}
By Eq.~\ref{densitybound},

$$\PP(\exists i | \|{x'}_{n}^{(i)}\|\leq \epsilon)=\Theta(r_n\cdot e_n^{d/4} \epsilon^{d})=\Theta(1)$$
 if $\epsilon = \Theta(  e_n^{-1/4} \cdot r_n^{-1/d} )$.
 Therefore we obtain:
\begin{align*}
ASR_n &\leq \min_{1\leq i \leq \lambda+r_n} \|{x'}_{n}^{(i)}\|^{2}\\
			&\leq\epsilon^{2}\\
			&=O(e_n^{-1/2} \cdot r_n^{-2/d}) \\
			&=O(e_n^{-1/2-2/d})
\end{align*}
Since $e_n=(1+\lambda)\sum_{i=1}^{n} r_{i}=(1+\lambda)\cdot R\sum_{i=1}^{n}\zeta^{i}=\Theta(r_n)$
Hence we have the result, $s(ASR)\leq -1/2-2/d$. 
\end{proof}

\begin{customthm}{5}[$ASR$ of Fabian algorithm]
Let $F$ be a $\lambda$-convex and $\mu$-smooth function corrupted by an additive noise with upper bounded density and with optimum randomly drawn according to a distribution with upper bounded density. Then, a.s.,
\begin{equation*}
s(ASR)= -\min(2\beta,2\gamma).
\end{equation*}
\end{customthm}
\begin{proof}
The upper bounded density is used (Theorem 3 section 4.1)
) for ensuring that no recommendation is never exactly equal to the optimum.

Using the algorithm notations, $$ASR_n= {\min_{n,i,j} F(x^{(i,j)+}(n)) - F(x^*)}$$ (or ${\min_{n,i,j} F(x^{(i,j)-}(n)) - F(x^*)}$, which does not affect the proof), where $x^{(i,j)+}(n)$ is the $n^{th}$ search point. Noting that since there is $s\times d$ evaluations per iterations, $\tilde{x}$ is updated only every $s\times d$ evaluations, so we have $\tilde{x}_n=\tilde{x}_{n+1}=\dots=\tilde{x}_{n+s\times d -1}$, therefore $x^{(i,j)+}(n)=\tilde{x}_{\left\lfloor n/s\times d\right\rfloor} +c_n u_j e_i$. 
By the convexity of $F$, the fact that the gradient of $F$ in $x^*$ is $0$, 
\begin{align*}
F(x^{(i,j)+}(n)) -& F(x^*)\geq \frac{\lambda}{2}\| (\tilde{x}_{\left\lfloor n/s\times d\right\rfloor} -x^*)+ (c_n u_j e_i) \|^2
\end{align*}
Similarly, by using the $\mu$-smoothness of $F$,
\begin{align*}
F(x^{(i,j)+}) - F(x^*) \leq\frac{\mu}{2}\| (\tilde{x}_{\left\lfloor n/s\times d\right\rfloor} -x^*)+(c_n u_j e_i)\|^2
\end{align*}
Then $$F(x^{(i,j)+}(n)) - F(x^*)=\Theta(\| (\tilde{x}_{\left\lfloor n/s\times d\right\rfloor} -x^*)+ (c_n u_j e_i)\|^2)$$
If $\beta >\gamma$, then the main term is the last one and we get a rate $-2\gamma$.
If $\beta \leq\gamma$, then the main term is the first one and we get a rate $-2\beta$.

%
%
\end{proof}

\end{document}